\newcommand{\field}[1]{\mathbb{#1}}
\newcommand{\N}{\field{N}}
\newcommand{\Y}{\mathrm{Y}}
\newcommand\enn{\mathbb N}
\newcommand\err{\mathbb R}
\newcommand{\loglike}[1]{\mathop{\mathrm {#1}}\nolimits}
\newcommand{\diam}{\loglike{diam}}
\newcommand{\eps}{\varepsilon}
\newcommand{\ncconv}{\overline{\mbox{conv}}}
\newcommand{\HB}{\text{H{\kern -0.35em}B}}
\newcommand{\vertiii}[1]{{\left\vert\kern-0.25ex\left\vert\kern-0.25ex\left\vert#1\right\vert\kern-0.25ex\right\vert\kern-0.25ex\right\vert}}
\DeclareMathOperator{\spann}{span}
\newcounter{abcd}
{\setcounter{abcd}{0}
\begin{list}%
{{\rm (\alph{abcd})}} 
{\usecounter{abcd}
\parsep=\parskip
\topsep=1pt plus 2pt minus 1pt
\itemsep=1pt plus 2pt minus 1pt
\leftmargin=3\baselineskip \labelsep=.6\baselineskip
\labelwidth=2.4\baselineskip
\rightmargin 0pt}%
}%
{\end{list}}
\title{Diameter 2 properties and convexity}
\author[T.~A.~Abrahamsen]{Trond A.~Abrahamsen}
\address[T.~A.~Abrahamsen]{Department of Mathematics, University of  Agder, Postbox 422 4604 Kristiansand, Norway.} 
\email{trond.a.abrahamsen@uia.no}
\urladdr{http://home.uia.no/trondaa/index.php3}
\author[P.~H{\'a}jek]{Peter H{\'a}jek}
\address[P.~H{\'a}jek]{Mathematical Institute, Czech Academy of
  Science, \v{Z}itn{\'a} 25, 115 67 Praha 1, Czech Republic, and
  Department of Mathematics, Faculty of Electrical Engineering, Czech
  Technical University in Prague, Zikova 4, 160 00, Prague}
\email{hajek@math.cas.cz}
\author[O.~Nygaard]{Olav Nygaard}
\address[O.~Nygaard]{Department of Mathematics, University of Agder, Servicebox
  422, 4604 Kristiansand, Norway}
\email{Olav.Nygaard@uia.no}
\urladdr{http://home.hia.no/$~$olavn/}
\author[J.~Talponen]{Jarno Talponen}
\address[J.~Talponen]{Department of
  Physics and Mathematics, University of Eastern Finland, Box 111,
  FI-80101 Joensuu, Finland} 
\email{talponen@iki.fi}
\author[S.~Troyanski]{Stanimir Troyanski}
\address[S.~Troyanski]{Departamento de Matem{\'a}ticas, Universidad de Murcia,
Campus de Espinardo, 30100 Espinardo (Murcia), Spain, and Institute of
Mathematics and Informatics, Bulgarian Academy of Science, bl.8,
acad. G. Bonchev str. 1113 Sofia, Bulgaria} 
\email{stroya@um.es}
\thanks{The second author was financially supported by GA CR
  P201/11/0345 and RVO:   67985840. The fourth author was financially
  supported by Finnish Cultural Foundation, V\"{a}is\"{a}l\"{a}
  Foundation, and Academy of Finland Project \#268009. The fifth
  author was partially supported by MTM2014-54182-P and the
  Bulgarian National Scientific Fund under Grant DFNI-I02/10.}
\keywords {Diameter 2 property; Midpoint locally uniformly rotund;
  Daugavet property} \subjclass[2010]{46B04, 46B20}
\newtheorem{thm}{Theorem}[section]
\newtheorem{prop}[thm]{Proposition}
\newtheorem{lem}[thm]{Lemma}
\newtheorem{cor}[thm]{Corollary}
\theoremstyle{definition}
\newtheorem{defn}[thm]{Definition}
\newtheorem{quest}{Question}
\theoremstyle{remark}
\begin{document}

\begin{abstract}
  We present an equivalent midpoint locally uniformly rotund (MLUR)
  renorming $X$ of $C[0,1]$ on which every weakly compact projection $P$
  satisfies the equation $\|I-P\| = 1+\|P\|$ ($I$ is the
  identity operator on $X$). As a consequence we obtain an MLUR space $X$
  with the properties D2P, that every non-empty relatively weakly open subset
  of its unit ball $B_X$ has diameter 2, and the LD2P+, that for
  every slice of $B_X$ and every norm 1 element $x$ inside the
  slice there is another element $y$ inside the slice of distance as
  close to 2 from $x$ as desired. An example of an MLUR space
  with the D2P, the LD2P+, and with convex combinations of slices of arbitrary
  small diameter is also given. 
\end{abstract}

\maketitle

\section{Introduction}\label{sec:intro}
Let $X$ be a Banach space. We say that $X$ (or its norm $\|\cdot\|$)
is \emph{midpoint locally uniformly rotund [MLUR] (resp. weak midpoint
  locally uniformly rotund [weak MLUR])} if every $x$ in the
unit sphere $S_X$ of $X$ is a strongly extreme point (resp.
strongly extreme point in the weak topology), i.e.  for every
sequence $(x_n)$ in $X$, we have that $x_n \to 0$ in norm (resp. $x_n
\to 0$ weakly) whenever $\|x \pm x_n\| \to 1$. 

Let $x^* \in S_{X^*}$ and $\eps >0$. By a slice of the unit
ball $B_X$ of $X$ we mean a set of the form \[S(x^*,\eps):=\{x \in B_X:
x^*(x) > 1 - \eps\}.\] 

Over the latest 15 years quite much has been discovered concerning
Banach spaces with various kinds of diameter 2 properties (see
e.g. \cite{MR2002e:46057}, \cite{MR3098474}, \cite{HL2}, \cite{HLP},
\cite{BGLPRZ4}, \cite{BGLPRZ1} to mention a few). 

\begin{defn} \label{defn:diam2p} A Banach space $X$ has the
  \begin{enumerate}
  \item [a)] {\it local diameter 2 property} (LD2P) if every slice of $B_X$ has
    diameter 2.
  \item [b)] {\it diameter 2 property} (D2P) if every
    non-empty relatively weakly open subset of $B_X$ has diameter 2.
  \item [c)] {\it strong diameter 2 property} (SD2P) if every finite convex
    combination of slices of $B_X$ has diameter 2.
  \end{enumerate}
\end{defn}

By \cite[Lemma~II.1~p.~26]{MR912637} c) implies b) and of course b)
implies a). Non of the reverse implications hold (see
\cite[Theorem~2.4]{BGLPRZ4} and \cite[Theorem~1]{HL2} or
\cite[Theorem~3.2]{ACBGLPR}). However, note Proposition
\ref{prop:prop0} below which is an immediate consequence of Choquet's
lemma (see e.g. \cite[Lemma~3.69 p.~111]{MR2766381}).

\begin{lem}[Choquet]\label{thm:choquet}
  Let $C$ be a compact convex set in a locally convex
  space $X$. Then for every $x \in \mbox{ext}(C)$, the extreme points
  in $C$, the slices of $C$ containing $x$ form a neighborhood base
  of $x$ in the relative topology of $C$. 
\end{lem}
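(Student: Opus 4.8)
The plan is to deduce the statement from two classical tools: the Hahn--Banach separation theorem for a point and a disjoint closed convex set in a (Hausdorff) locally convex space, and Milman's partial converse to the Krein--Milman theorem, which asserts that the extreme points of $\overline{\mathrm{conv}}(A)$ are contained in $\overline A$ whenever $\overline{\mathrm{conv}}(A)$ is compact. Fix $x\in\mathrm{ext}(C)$ and a relatively open set $U\subseteq C$ with $x\in U$. Since every slice of $C$ is relatively open in $C$ (its defining functional being continuous), it suffices to produce a continuous linear functional $f$ on $X$ and a scalar $\alpha$ with $f(x)>\alpha$ and $\{y\in C:f(y)>\alpha\}\subseteq U$. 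If $C\setminus U=\emptyset$ this is trivial, so assume $C\setminus U\neq\emptyset$.

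Next I would put $D:=\overline{\mathrm{conv}}(C\setminus U)$. As $C$ is convex we have $\mathrm{conv}(C\setminus U)\subseteq C$, and as $C$ is compact (hence closed) this gives $D\subseteq C$; being a closed subset of the compact set $C$, the set $D$ is itself compact, as well as convex. The crucial point is that $x\notin D$. Indeed, $C\setminus U$ is closed in $C$, hence compact, so by Milman's theorem $\mathrm{ext}(D)\subseteq\overline{C\setminus U}=C\setminus U$. Were $x$ in $D$, then, since $x$ is an extreme point of $C$ and $D\subseteq C$, the point $x$ would also be an extreme point of $D$, whence $x\in C\setminus U$ --- contradicting $x\in U$.

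Finally, $\{x\}$ and $D$ are disjoint, $D$ is closed and convex, $\{x\}$ is compact and convex, and $X$ is locally convex, so Hahn--Banach separation yields a continuous linear functional $f$ and a scalar $\alpha$ with $f(x)>\alpha>\sup_{y\in D}f(y)$ (replace $f$ by $-f$ if needed to get the inequality in this direction). Then $S:=\{y\in C:f(y)>\alpha\}$ is a slice of $C$ containing $x$, and if $y\in C$ with $y\notin U$ then $y\in C\setminus U\subseteq D$, so $f(y)\le\sup_D f<\alpha$ and thus $y\notin S$; hence $S\subseteq U$. As $U$ was an arbitrary relative neighbourhood of $x$ in $C$, the slices of $C$ containing $x$ form a neighbourhood base at $x$. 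The only non-routine step is the verification that $x\notin D$: this is exactly where extremality of $x$ enters, via Milman's theorem, and it is the heart of the argument; everything else is a standard combination of compactness and separation.
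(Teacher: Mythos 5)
Your proof is correct. Note that the paper does not prove this lemma at all --- it is quoted from the literature (the citation is to Lemma~3.69 of Fabian et al.) --- so there is no in-paper argument to compare against; but your argument is the standard one in outline: reduce to showing $x\notin D:=\overline{\mathrm{conv}}(C\setminus U)$ and then apply Hahn--Banach separation. Each step checks out: slices are relatively open; $D\subseteq C$ is compact and convex; an extreme point of $C$ lying in a convex subset $D\subseteq C$ is automatically extreme in $D$; and $C\setminus U$ is closed in $X$, so Milman's theorem forces $\mathrm{ext}(D)\subseteq C\setminus U$, giving the contradiction. The only stylistic difference from the usual textbook treatment is that you invoke Milman's theorem as a black box, whereas the standard proof of Choquet's lemma establishes $x\notin D$ directly by covering the compact set $C\setminus U$ with finitely many compact convex sets $K_i\subseteq C$ avoiding $x$, noting that $\mathrm{conv}\left(\bigcup_i K_i\right)$ is compact (hence closed and containing $D$), and using extremality of $x$ in $C$ to rule out $x=\sum_i\lambda_i k_i$; this is essentially the same covering argument by which Milman's theorem itself is proved, so your route is a touch heavier in its prerequisites but entirely legitimate and not circular (Milman's theorem is proved from Krein--Milman and separation, not from Choquet's lemma). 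One small hypothesis worth making explicit, as you do only parenthetically: the locally convex space must be Hausdorff for compact sets to be closed and for the separation theorem to produce a nonzero functional.
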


\begin{prop}\label{prop:prop0} 
  If $X$ is weak MLUR then the LD2P implies the D2P. 
\end{prop}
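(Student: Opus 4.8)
The plan is to argue by contraposition on the relatively weakly open set: suppose $X$ is weak MLUR and has the LD2P, and let $W$ be a non-empty relatively weakly open subset of $B_X$; I want to show $\diam W = 2$. First I would pick a point $x \in W$; shrinking $W$ if necessary I may assume $x$ has norm as close to $1$ as I like, indeed by scaling I can arrange $\|x\|$ to be very near $1$ while keeping $x$ in a smaller relatively weakly open set still contained in $W$ (alternatively, since relatively weakly open subsets of $B_X$ always meet $S_X$ — a standard fact — I may simply take $x \in W \cap S_X$). The point of using weak MLUR is this: by Choquet's lemma (Lemma \ref{thm:choquet}) applied to the unit ball, or rather a direct Hahn–Banach separation argument, I want to exhibit, inside $W$, a slice of $B_X$; then the LD2P gives that slice diameter $2$, and since the slice sits inside $W$ we are done.

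The key step is therefore: \emph{a weak MLUR space has the property that every non-empty relatively weakly open subset of $B_X$ contains a slice of $B_X$.} To see this, take $x \in W$; since $W$ is relatively weakly open there are $x_1^*,\dots,x_n^* \in S_{X^*}$ and $\delta > 0$ with
\[
 \{ y \in B_X : |x_i^*(y) - x_i^*(x)| < \delta,\ i = 1,\dots,n \} \subseteq W.
\]
The strategy is to show that for $x$ a strongly extreme point of $B_X$ (which in the weak-MLUR setting need not hold for every $x$, but does hold on $S_X$ up to a weak-topology caveat), $x$ lies in the weak closure of the extreme points, or more directly: I claim the finite-intersection relatively weakly open neighbourhood above contains a slice. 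Equivalently, working in the weak topology, $x$ has a neighbourhood base of slices of $B_X$. This is exactly Choquet's lemma provided $x$ is an extreme point of $B_X$ in the relative weak topology; strong extremality in the weak topology (which is what weak MLUR delivers at every point of $S_X$) is stronger than extremality, so Choquet's lemma applies and yields a slice $S(x^*,\eps) \ni x$ with $S(x^*,\eps) \subseteq W$.

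Finally, apply the LD2P to the slice $S(x^*,\eps)$: it has diameter $2$. Since $S(x^*,\eps) \subseteq W$, we get $\diam W \ge \diam S(x^*,\eps) = 2$, and as $W \subseteq B_X$ also $\diam W \le 2$, so $\diam W = 2$. As $W$ was an arbitrary non-empty relatively weakly open subset of $B_X$, this is precisely the D2P. The main obstacle is the first paragraph's reduction — ensuring that the point witnessing the weakly open set can be taken to be a point where strong extremality in the weak topology is available (i.e. essentially a norm-one point), and then correctly invoking Choquet's lemma in the locally convex space $(X,\text{weak})$ with $C = B_X$; once a slice is trapped inside $W$, the rest is immediate from the LD2P.
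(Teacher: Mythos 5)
Your overall strategy is the right one and matches the paper's: trap a slice of $B_X$ inside the given relatively weakly open set $W$ and then let the LD2P give that slice diameter $2$. The reduction to a point of $S_X$ in the first paragraph is also fine. But the key step has a genuine gap: you invoke Choquet's lemma with $C = B_X$ in the locally convex space $(X,\mathrm{weak})$, and Lemma~\ref{thm:choquet} requires $C$ to be \emph{compact}. The ball $B_X$ is weakly compact only when $X$ is reflexive, and a reflexive space has denting points (RNP), hence cannot have the LD2P --- so your argument as written only covers a vacuous case. Moreover, mere extremality of $x$ in $B_X$ is not what makes the argument go through; the hypothesis you actually need to exploit is stronger, and this is precisely where ``weak MLUR'' (as opposed to ``every point of $S_X$ is extreme'') earns its keep.

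The paper's route around this is to move to the bidual: by the Godun--Lin--Troyanski characterization, the points of $B_X$ that are strongly extreme in the weak topology are exactly the extreme points of $B_X$ that \emph{remain extreme in} $B_{X^{**}}$. One then applies Choquet's lemma to $C = B_{X^{**}}$ with the weak$^*$ topology, which is compact by Alaoglu. Since the relative weak topology on $B_X$ is the restriction of the relative weak$^*$ topology on $B_{X^{**}}$, the set $W$ extends to a relatively weak$^*$-open $\widetilde W \ni x$ in $B_{X^{**}}$; Choquet produces a weak$^*$-slice of $B_{X^{**}}$ (hence one determined by a functional $x^* \in X^*$) containing $x$ and contained in $\widetilde W$, and its trace on $B_X$ is a non-empty slice of $B_X$ inside $W$. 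From there your final paragraph applies verbatim. So the fix is not cosmetic: without passing to $B_{X^{**}}$ (or supplying some other compactness), the slice you need inside $W$ is not produced.
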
 

\begin{proof}
  Simply recall that the points in $B_X$ which are strongly extreme in
  the weak topology are exactly the extreme points which continue to be
  extreme in $B_{X^{**}}$ (see e.g. \cite{MR1070518}) and then use
  Lemma \ref{thm:choquet} on $B_{X^{**}}$ given the weak$^*$ topology.  
\end{proof}

It is not evident that weak MLUR spaces with the LD2P
exist, but indeed they do. The quotient $C(\mathbb{T})/A$, where
$C(\mathbb{T})$ is the space of continuous functions on the complex unit circle
$\mathbb{T}$ and where $A$ is the disc algebra, is such an
example. Another example can be constructed as follows: Let $\Phi$ be
a function on $c_0$, the space of real valued sequences which converges to
$0$, defined by $\Phi(x_n) = \sum_{n=1}^\infty x^{2n}_n$. Define then a
norm on $c_0$ by $|x|=\inf\{\lambda >
0:\Phi(x/\lambda) \le 1\}$ for every $x \in c_0$. Then the space $(c_0,
|\cdot|)$ can be seen to be weak MLUR and to have the LD2P. 

The two examples mentioned
motivates the following question which we will address in this paper:
How rotund can a Banach space be and still have diameter 2 properties?
In \cite[Remarks 4) p.~286]{HaRao} it is pointed out that
$C(\mathbb{T})/A$ is M-embedded and that its dual norm is smooth (see
also \cite{Luck} and \cite[p.~167]{HWW}). Recall that $X$ is
M-embedded provided we can write $X^{***} = X^* \oplus_1 X^\perp$
where $X^\perp \subset X^{***}$ is the annihilator of $X$ (a good
source for the available theory of M-embedded spaces is
the book \cite{HWW}). It is well known that M-embedded spaces have the SD2P
\cite{MR3098474}, and so $C(\mathbb{T})/A$ actually furnishes an
example of a weak MLUR space with the SD2P. On can prove also that the
space $(c_0,|\cdot|)$ mentioned above have the same
properties. For still more examples see \cite{Dirk5}. The unit ball of
an M-embedded space cannot, however, contain strongly extreme points
\cite{HWW}, so no MLUR M-embedded space exists. Still, one can ask if there exists an MLUR space with the LD2P (=D2P in this case). Until now, no such example has been
known. But, in Section \ref{sec:mlur-d2p} of this paper we construct an
equivalent MLUR renorming $X$ of $C[0,1]$ for which every slice $S$ of
$B_X$ and every $x \in S \cap S_X$ there exists $y \in S$ of distance
as close to 2 from $x$ as we want, i.e. $X$ has the local diameter 2
property + (LD2P+). In particular this renorming has the LD2P and thus
the D2P as it is MLUR. Using this renorming we also construct in
Section 2 an example of an MLUR space which has the D2P, the LD2P+,
and has convex combinations of slices with arbitrary small diameter.   

In Section \ref{sec:ld2p+} we characterize the property LD2P+. We show
that a space $X$ has the LD2P+ if and only if the dual has the
weak$^*$ LD2P+ if and only if the equation $\|I - P\| = 1 + \|P\|$
holds for every weakly compact projection on $P$ on $X$ . It is also
proved that the LD2P+ is inherited by ai-ideals (see p.~11 for the
definition of this concept). 

In Section 4 we show that if a Banach lattice contains a strongly
extreme point $x$ which can be approximated by finite rank projections
at $x$, then $x$ is already a denting point. From this we see that it might
not be so easy to construct an equivalent MLUR norm on $c_0$ with the
D2P.

Section 5 contains a list of open questions.

The notation we use is mostly standard and is, if considered necessary,
explained as the text proceeds. 

\section{MLUR renormings of $C[0,1]$ with the D2P}\label{sec:mlur-d2p}

Let $D=(D_n)_{n=1}^\infty$ be a base of neighborhoods in $[0,1]$. For
each $x \in C[0,1]$ put $\|x\|_n=\sup_{d \in D_n}|x(d)|$ and note that
each $\|\cdot\|_n$ defines a semi-norm on $C[0,1]$. Now define a 
norm on $C[0,1]$ by
\[\|x\|_D:=(\sum_{n=1}^\infty2^{-n}\|x\|_n^2)^{1/2}.\]
By compactness there exists $b > 0$ such that
$b\|x\|_{\infty} \le \|x\|_D \le \|x\|_{\infty}$ so the norm $\|\cdot\|_D$ is
equivalent to the max-norm $\|\cdot\|_\infty$ on $C[0,1]$. The idea to
introduce this norm goes back to \cite{MR2474800}. 

In what follows let $X_D = (C[0,1],\|\cdot\|_D)$.

\begin{prop}\label{prop:mlurX} 
  For any base $D=(D_n)_{n=1}^\infty$ of neighborhoods in $[0,1]$ the
  space $X_D$ is MLUR.
\end{prop}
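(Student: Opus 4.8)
The plan is to verify the sequential definition of MLUR directly. Fix $x \in S_{X_D}$ and a sequence $(f_m) \subset X_D$ with $\|x+f_m\|_D \to 1$ and $\|x-f_m\|_D \to 1$; since $\|\cdot\|_D$ and $\|\cdot\|_\infty$ are equivalent, it suffices to prove $\|f_m\|_\infty \to 0$. The starting point is the elementary per-seminorm estimate
\[
\|x+f\|_n^2 + \|x-f\|_n^2 \ \ge\ \tfrac12\big(\|x+f\|_n + \|x-f\|_n\big)^2 \ \ge\ 2\|x\|_n^2 ,
\]
valid for every $f$, using $a^2+b^2 \ge \tfrac12(a+b)^2$ together with the triangle inequality for the seminorm $\|\cdot\|_n$. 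Multiplying by $2^{-n}$ and summing gives $\|x+f\|_D^2 + \|x-f\|_D^2 \ge 2\|x\|_D^2$, so with $f=f_m$ the left-hand side tends to $2=2\|x\|_D^2$. Hence the nonnegative series $\sum_n 2^{-n}\big(\|x+f_m\|_n^2 + \|x-f_m\|_n^2 - 2\|x\|_n^2\big)$ tends to $0$, and therefore so does each of its terms. A routine one-variable observation (if $a_m,b_m\ge 0$, $a_m+b_m\ge 2c$ and $a_m^2+b_m^2\to 2c^2$, then $a_m\to c$ and $b_m\to c$) then yields, for every fixed $n$,
\[
\|x+f_m\|_n \to \|x\|_n \qquad\text{and}\qquad \|x-f_m\|_n \to \|x\|_n .
\]

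Next I would argue by contradiction. Suppose $\|f_m\|_\infty \not\to 0$; after passing to a subsequence there are $\delta>0$ and points $t_m\in[0,1]$ with $|f_m(t_m)|\ge\delta$, and by compactness we may assume $t_m\to t_0$. Since $x$ is continuous at $t_0$ and $D$ is a base, pick $n$ with $t_0\in D_n$ and $\operatorname{osc}(x,D_n)<\delta/4$; then $\|x\|_n \le |x(t_0)|+\delta/4$. On the other hand, for all large $m$ we have $t_m\in D_n$ and $|x(t_m)|\ge |x(t_0)|-\delta/4$, so, evaluating at $t_m$ and using the identity $\max(|u+v|,|u-v|)=|u|+|v|$,
\[
\max\big(\|x+f_m\|_n,\|x-f_m\|_n\big)\ \ge\ |x(t_m)|+|f_m(t_m)|\ \ge\ |x(t_0)|+\tfrac{3\delta}{4}.
\]
Thus $\max(\|x+f_m\|_n,\|x-f_m\|_n)-\|x\|_n\ge \delta/2$ for all large $m$, contradicting the convergence established above. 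Hence $\|f_m\|_\infty\to 0$ and $X_D$ is MLUR.

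The main obstacle is conceptual rather than computational: each seminorm $\|\cdot\|_n$ is a sup-seminorm and is very far from midpoint rotund, so one cannot control $f_m$ from any single $n$. The point is that the $\ell_2$-type aggregation over a base of neighborhoods converts a perturbation $f_m$ that is large \emph{somewhere} into one that is large on an arbitrarily small basic set $D_n$, and on such a set the sup-seminorm behaves, to first order, like evaluation at $t_0$, where the rigid identity $\max(|u+v|,|u-v|)=|u|+|v|$ forces a genuine increase of $\max(\|x\pm f_m\|_n)$ over $\|x\|_n$. Making the choice of $n$ depend on $x$ (through its modulus of continuity at $t_0$) and on $\delta$ is the only delicate point to arrange carefully.
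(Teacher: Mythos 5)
Your proof is correct and follows essentially the same route as the paper's: first the convexity/parallelogram step (which the paper simply cites as \cite[Fact II.~2.~3]{MR1211634}) to upgrade $\|x\pm f_m\|_D\to\|x\|_D$ to $\|x\pm f_m\|_n\to\|x\|_n$ for each fixed $n$, and then the observation that on a basic set $D_n$ of small oscillation for $x$ the identity $\max(|u+v|,|u-v|)=|u|+|v|$ forces $|f_m|$ to be small. The only cosmetic difference is that you run the endgame by contradiction via a convergent subsequence of witness points, whereas the paper gets the uniform bound $|y_k(t_0)|\le 2\varepsilon$ directly from a finite subcover of $[0,1]$ by small $D_m$'s.
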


\begin{proof} 
Let $x$ and $(y_k)_{k=1}^\infty$ be such that $\lim_{k\to\infty}\|x\pm y_k\|_D=\|x\|_D$. We will show that $\|y_k\|_D\to 0$ to establish that $\|\cdot\|_D$ is MLUR.
By a convexity argument (see e.g. \cite[Fact II.~2.~3]{MR1211634}) we have
\begin{equation}\label{eq:likn1}
   \lim_{k\to\infty}\|x\pm y_k\|_n=\|x\|_n, \hspace*{1cm}n=1,2,\ldots
\end{equation}
Let $\varepsilon>0$. We will first make three simple observations:
\begin{enumerate}
  \item[a)] By uniform continuity of $x$, we can find
   $\delta=\delta(\varepsilon)>0$ such that the oscillation over $A$,
   $\sup_{t,s\in A}(x(t)-x(s))$, is less than $\varepsilon$ whenever
   $A\subset [0,1]$ is of length less then $\delta$.
 \item[b)] Proceeding with the $\delta$ above, since
   $(D_n)_{n=1}^\infty$ is a base for the topology on the
   compact space $[0,1]$, there is a finite subset
   $M\subset\mathbb{N}$ such that $\cup_{m\in M}D_m$ covers
   $[0,1]$ and the length of any $D_m, m\in M$ is less than
   $\delta$.
  \item[c)] Proceeding with $M$ and $\delta$ as above, having in
    mind (\ref{eq:likn1}) which of course is true for each $m\in M$, we
    can find $K\in\mathbb{N}$ such that $\|x\pm y_k\|_m\leq
    \|x\|_m+\varepsilon$ whenever $k\geq K$ and $m\in M$.
\end{enumerate}

We are now ready to finish the proof. To this end, let $t_0$ be an
arbitrary point in $[0,1]$. We will show that $|y_k(t_0)|\leq
2\varepsilon$ for $k\geq K$. Choose $\sigma_k$ from $\{-1,1\}$ such
that \[|x(t_0)+\sigma_k y_k(t_0)|=|x(t_0)|+|y_k(t_0)|.\]
Since $\cup_{m\in M} D_m$ covers $[0,1]$ there is $m'\in M$ such that
$t_0\in D_{m'}$. Now remember that the length of all the $D_m$'s are $<\delta$, such that the oscillation of $x$ over $D_{m'}$ is less than $\varepsilon$. We get
\[\begin{array}{ll} |y_k(t_0)|&=|x(t_0)+\sigma_k(t_0)y_k(t_0)|-|x(t_0)|\\[2mm]
&\leq \sup_{t\in D_{m'}}|x(t)+\sigma_k(t)y_k(t)|-|x(t_0)|\\[2mm]
&\leq \|x+\sigma_k y_k\|_{m'} - (\|x\|_{m'}-\varepsilon)\\[2mm]
&\leq \|x\|_{m'}+\varepsilon -\|x\|_{m'} +\varepsilon=2\varepsilon,
\end{array}\]
provided $k\geq K$.
\end{proof}

\begin{prop}\label{prop:ld2pX}
  For any base $D=(D_n)_{n=1}^\infty$ of neighborhoods in
  $[0,1]$ the space $X_D$ has the LD2P+. 
\end{prop}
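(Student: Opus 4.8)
The plan is the following. Fix a slice $S=S(\xs,\eps)$ with $\xs\in S_{X_D^{\ast}}$ and a norm-one element $x\in S$, and fix a target $\eta>0$; we must produce $y\in S$, $y\neq x$, with $\|x-y\|_D>2-\eta$. The crucial idea is to look only for $y$ of the special shape $y=x(1-2\phi)$ with $\phi\colon[0,1]\to[0,1]$ continuous. For any such $\phi$ one has $|y(t)|=|x(t)|\,|1-2\phi(t)|\le|x(t)|$ for all $t$, hence $\|y\|_n=\sup_{d\in D_n}|y(d)|\le\|x\|_n$ for every $n$ and therefore $\|y\|_D\le\|x\|_D=1$; so membership of $y$ in $B_{X_D}$ is automatic. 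Moreover $x-y=2x\phi$, so $\|x-y\|_D=2\|x\phi\|_D$, and the whole problem reduces to choosing $\phi$ so that $\|x\phi\|_D$ is close to $1$ while $\xs(y)>1-\eps$.

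To build $\phi$, identify $\xs$ with a finite regular Borel measure $\mu$ on $[0,1]$, so that $\xs(z)=\int z\,d\mu$ for $z\in C[0,1]$; recall $\mu$ has at most countably many atoms. Put $\tau:=\xs(x)-(1-\eps)>0$ and note $\|x\|_\infty\ge\|x\|_D=1$. Pick $N\in\N$ with $2^{-N}\|x\|_\infty^2$ small and $\rho\in(0,1)$ small. For each $n\le N$ with $\|x\|_n>0$, choose $d_n\in D_n$ with $|x(d_n)|>\|x\|_n-\rho$; since $\{t\in D_n:|x(t)|>\|x\|_n-\rho\}$ is a non-empty relatively open subset of $[0,1]$ it contains a nondegenerate interval, so we may take the $d_n$ pairwise distinct and none of them an atom of $\mu$. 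Using $|\mu|(\{d_n\})=0$ and regularity, fix pairwise disjoint open intervals $U_n\ni d_n$ with $|\mu|(U_n)<\tau/(4N\|x\|_\infty)$, and let $\psi_n$ be a continuous Urysohn-type bump with $0\le\psi_n\le1$, $\psi_n(d_n)=1$ and $\psi_n\equiv0$ off $U_n$. Set $\phi:=\sum_n\psi_n$; then $\phi$ is continuous, $0\le\phi\le1$, $\phi$ vanishes off $\bigcup_nU_n$, and $x\phi$ is not identically zero (it is nonzero at $d_{n_0}$ for any $n_0\le N$ with $\|x\|_{n_0}>\rho$, which exists since $\sup_n\|x\|_n=\|x\|_\infty\ge1$). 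Consequently $|\xs(x-y)|=\bigl|2\int x\phi\,d\mu\bigr|\le2\sum_n\int_{U_n}|x|\,d|\mu|\le2N\|x\|_\infty\cdot\frac{\tau}{4N\|x\|_\infty}=\frac{\tau}{2}$, so $\xs(y)\ge\xs(x)-\frac{\tau}{2}=1-\eps+\frac{\tau}{2}>1-\eps$; thus $y\in S$ and $y\neq x$.

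Finally, for the distance: for each $n\le N$ with $\|x\|_n>0$ we get $\sup_{d\in D_n}\bigl(|x(d)|\,\phi(d)\bigr)\ge|x(d_n)|\,\phi(d_n)=|x(d_n)|>\|x\|_n-\rho$, whence $\bigl(\sup_{d\in D_n}(|x(d)|\,\phi(d))\bigr)^2\ge\|x\|_n^2-2\rho\|x\|_\infty$ (the terms with $\|x\|_n=0$ contribute nothing, as $\rho<\|x\|_\infty$). Summing $2^{-n}$ times these inequalities over $n\le N$ and using $\sum_{n=1}^\infty2^{-n}\|x\|_n^2=\|x\|_D^2=1$ together with $\sum_{n>N}2^{-n}\|x\|_n^2\le2^{-N}\|x\|_\infty^2$ yields $\|x\phi\|_D^2\ge1-2^{-N}\|x\|_\infty^2-2\rho\|x\|_\infty$; choosing $N$ large and $\rho$ small makes the right-hand side exceed $(1-\eta/2)^2$, so $\|x-y\|_D=2\|x\phi\|_D>2-\eta$, as required. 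The point that needs real care — and is the heart of the argument — is the simultaneous demand on $\phi$: it must be identically $1$ near a near-maximiser of each of the finitely many seminorms $\|\cdot\|_1,\dots,\|\cdot\|_N$ (to keep $\|x\phi\|_D$ near $1$) yet be supported on a set of tiny $|\mu|$-measure (to keep $y$ in the slice). Reconciling the two is exactly why the bumps must be placed at non-atoms of $\mu$ and why the size of their supports must be calibrated against the slack $\tau=\xs(x)-(1-\eps)$.
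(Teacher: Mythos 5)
Your proof is correct and follows essentially the same strategy as the paper's: on a set of tiny $|\mu|$-measure you flip the sign of $x$ at a near-maximizer of each of the first $N$ seminorms $\|\cdot\|_n$, which keeps $y$ in the ball and in the slice while forcing $\|x-y\|_n\approx 2\|x\|_n$. The only (cosmetic) difference is that you realize the flip multiplicatively via Urysohn bumps, $y=x(1-2\phi)$, whereas the paper rewires $x$ piecewise-linearly on small intervals $E_n\subset D_n$; your handling of atoms of $\mu$ by placing the bumps at non-atoms is in fact slightly cleaner than the paper's appeal to the Lebesgue decomposition.
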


\begin{proof}
We know that the dual of
 $X$ is isomorphic to
 $\mbox{rca}[0,1]$, the space of regular and countably additive Borel
 measures on $[0,1]$. Let $\lambda \in \mbox{rca}[0,1]$ be the
 Lebesgue measure. By Lebesgue's decomposition theorem, any
 measure $m \in \mbox{rca}[0,1]$ can be decomposed as $m = \mu + \nu$,
 where $\mu$ is absolutely continuous with respect to  $\lambda$ and
 $\nu$ and $\lambda$ are singular. 

  Now, let $m \in S_{X^*}$, $\eps > 0$, and denote by $S$ the
  slice \[\{x \in B_X: \int_{[0,1]} x \,dm > 1 - \eps\}.\] Let $x \in
  S$ and find $1- \|x\| \le \delta < \eps$ and $N \in \enn$ such
  that \[(\sum_{n=1}^N 2^{-n}\|x\|_n^2)^{1/2} > 1 - \delta > 1 - \eps.\] 
  
  There exist open intervals $E_n=(r_n,t_n)$ inside 
  $D_n$ with $s_n=\frac{r_n + t_n}{2}$ such that

  \begin{enumerate}
    \item[a)]$E_i \cap E_j = \emptyset$ for every $i \not= j$, 
    \item[b)]$(\sum_{n=1}^N 2^{-n}|x(e_n)|^2)^{1/2} > 1 - \delta$ whenever
      $e_n \in E_n$,
    \item[c)] $\nu(\{s_n\}) = 0$ for every $1 \le n \le N$,
    \item[d)]$b^{-1}\sum_{n=1}^N m(E_n) < \eta$ where $E=\cup_{n=1}^N
      E_n$, and $\int_{[0,1] \setminus E} x\,dm - \eta > 1- \eps,$    
  \end{enumerate}

Now, define a continuous function $y$ on $[0,1]$ by letting $y(r_n)=x(r_n)$, $y(s_n) =
-x(s_n)$, $y(t_n)= x(t_n)$, linear on
$(r_n, s_n)$ and $(s_n,t_n)$, and otherwise equal to
$x$. Then $y \in X$ with $\sup_{d \in E_n}|y(d)| \le \sup_{d \in
  E_n}|x(d)|$ and $y(d)=x(d)$ for every $d \in [0,1] \setminus
E$. Therefore $\|y\| \le \|x\| \le 1$. Moreover, we have
\begin{align*}
  \int_{[0,1]}y \,dm &= \int_{[0,1]\setminus E}y \,dm + \int_{E}y \,dm\\
                             &\ge \int_{[0,1]\setminus E}x \,dm - \sum_{n=1}^N
                               b^{-1} m(E_n) > \int_{[0,1]\setminus E}x \,dm -
                               \eta > 1 - \eps,
\end{align*}
 and
 \begin{align*}
   \|x-y\| &\ge (\sum_{n=1}^N 2^{-n}\|x-y\|_n^2)^{1/2} \\ 
                & \ge (\sum_{n=1}^N 2^{-n}|x(s_n) - y(s_n)|^2)^{1/2} =2
                  (\sum_{n=1}^N 2^{-n}|x(s_n)|^2)^{1/2} > 2 - 2\delta.          
 \end{align*}
\end{proof}

From the propositions \ref{prop:mlurX}, \ref{prop:ld2pX}, and
\ref{prop:prop0} we obtain the following result.

\begin{thm}\label{thm:mlur-d2p}
  For any base $D=(D_n)_{n=1}^\infty$ of neighborhoods in
  $[0,1]$ the space $X_D$ is MLUR and has
  the D2P and the LD2P+.
\end{thm}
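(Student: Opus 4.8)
The plan is to obtain this theorem by simply combining the three preceding results: Proposition~\ref{prop:mlurX} already gives that $X_D$ is MLUR, and Proposition~\ref{prop:ld2pX} already gives that $X_D$ has the LD2P+, so the only new thing to verify is the D2P.

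First I would record two elementary observations. On one hand, MLUR implies weak MLUR: if $\|x\pm y_k\|_D\to\|x\|_D$ forces $y_k\to 0$ in norm, then a fortiori $y_k\to 0$ weakly, so every point of $S_{X_D}$ is a strongly extreme point in the weak topology. On the other hand, the LD2P+ implies the LD2P: given a slice $S(m,\eps)$ of $B_{X_D}$ with $m\in S_{X^*}$, choose $x\in B_{X_D}$ with $\int_{[0,1]}x\,dm>1-\eps$; replacing $x$ by $x/\|x\|_D$ (which only increases $\int_{[0,1]}x\,dm$ when $\|x\|_D<1$) we may assume $x\in S_{X_D}\cap S(m,\eps)$, and then the LD2P+ furnishes, for each $\theta>0$, an element $y\in S(m,\eps)$ with $\|x-y\|_D>2-\theta$, whence $\diam S(m,\eps)=2$.

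With these in hand, I would conclude by invoking Proposition~\ref{prop:prop0}: $X_D$ is weak MLUR and has the LD2P, hence has the D2P. Together with the MLUR and LD2P+ properties already established, this proves the theorem. I do not expect any genuine obstacle here, since all the content sits in Propositions~\ref{prop:mlurX} and~\ref{prop:ld2pX}; the only step needing a word of justification is the passage from LD2P+ to LD2P, i.e.\ the remark that a slice cut out by a norm-one functional always meets $S_{X_D}$.
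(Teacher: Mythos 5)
Your proposal is correct and is essentially the paper's own argument: the theorem is stated there as an immediate consequence of Propositions~\ref{prop:mlurX}, \ref{prop:ld2pX}, and \ref{prop:prop0}, exactly the combination you describe. Your two bridging remarks (MLUR $\Rightarrow$ weak MLUR, and LD2P+ $\Rightarrow$ LD2P via normalizing a point of the slice) are the same implicit steps the paper relies on, just spelled out.
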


In \cite{HLP} dual characterizations of the diameter 2 properties in
Definition \ref{defn:diam2p} were obtained. To formulate these we need
to introduce some concepts.

\begin{defn}
  For a Banach space $X$ we say that (the norm on) $X$ is
  \begin{enumerate}
    \item [a)]  \emph{locally octahedral}) if for every $\eps >0$ and
      every $x \in S_X$ there exists $y \in S_X$ such that $\|x \pm y\| > 2 - \eps$. 
    \item[b)] \emph{octahedral} if for every $\eps >0$ and every
      finite set of points $(x_i)_{i=1}^n \subset S_X$ there exists $y
      \in S_X$ such that $\|x_i + y\| > 2 - \eps$ for every $1 \le i
      \le n$.
  \end{enumerate}
\end{defn}
  
  For a Banach space $X$, $x \in S_X$, and $\eps >0$ we mean
  by a \emph{weak$^*$-slice} of $B_{X^*}$ a set of the from \[S(x,\eps):=\{x^* \in
  B_{X^*}: x^*(x) > 1 - \eps\}.\]

 \begin{defn} \label{defn:w*diam2p} A dual Banach space $X^*$ has the
  \begin{enumerate}
  \item [a)] {\it weak$^*$-local diameter 2 property (weak$^*$-LD2P)}
    if every weak$^*$-slice of $B_{X^*}$ has diameter 2.
  \item [b)] {\it weak$^*$-strong diameter 2 property (weak$^*$-SD2P)}
    if every finite convex combination of weak$^*$ slices of $B_{X^*}$ has diameter 2.
  \end{enumerate}
\end{defn}

\begin{thm}{\cite[Theorems~3.1, 3.3, and 3.5]{HLP}}\label{thm:diam2-char}
  For a Banach space $X$ we have
  \begin{enumerate}
    \item [a)] $X$ is locally octahedral $\Leftrightarrow$ $X^*$ has
      the weak$^*$-LD2P.
    \item [b)] $X$ is octahedral $\Leftrightarrow$ $X^*$ has the weak$^*$-SD2P.
  \end{enumerate}
\end{thm}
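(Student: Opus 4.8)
The plan is to derive both equivalences directly from the definitions, using only the Hahn--Banach theorem together with the observation that a subset $C\subseteq B_{\Xs}$ has norm-diameter $2$ precisely when for every $\eta>0$ it contains two points differing by more than $2-\eta$ in norm (the diameter never exceeds $2$, since $C\subseteq B_{\Xs}$). Since (a) and (b) are the ``one point'' and ``finitely many points'' versions of the same argument, I would prove (a) in detail and then indicate the modifications for (b). One preliminary remark to record first: octahedrality is unaffected if one replaces the requirement $\|x_i+y\|>2-\eps$ by $\|x_i\pm y\|>2-\eps$, since one may apply the definition to the enlarged finite set $\{x_1,\dots,x_n,-x_1,\dots,-x_n\}\subseteq S_X$.

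For (a), implication $\Rightarrow$: fix a weak$^*$-slice $S(x,\eps)$ with $x\in S_X$ and fix $\eta>0$. Local octahedrality provides $y\in S_X$ with $\|x+y\|$ and $\|x-y\|$ both above $2-\delta$, where $\delta$ is chosen at the end. By Hahn--Banach pick $\xs_1,\xs_2\in S_{\Xs}$ with $\xs_1(x+y)>2-\delta$ and $\xs_2(x-y)>2-\delta$; then $\xs_1(x),\xs_2(x)>1-\delta$, so $\xs_1,\xs_2\in S(x,\eps)$ once $\delta<\eps$, while $\xs_1(y)>1-\delta$ and $\xs_2(y)<-1+\delta$, whence $\|\xs_1-\xs_2\|\ge(\xs_1-\xs_2)(y)>2-2\delta$; taking $\delta<\min\{\eps,\eta/2\}$ shows the slice has diameter $2$. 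For $\Leftarrow$: fix $x\in S_X$ and $\eps>0$, apply the weak$^*$-LD2P to $S(x,\delta)$ to get $\xs_1,\xs_2\in S(x,\delta)$ with $\|\xs_1-\xs_2\|>2-\delta$, and then $y\in B_X$ with $(\xs_1-\xs_2)(y)>2-\delta$, so $\xs_1(y)>1-\delta$ and $\xs_2(y)<-1+\delta$. Then $\|x+y\|\ge\xs_1(x)+\xs_1(y)>2-2\delta$ and $\|x-y\|\ge\xs_2(x)-\xs_2(y)>2-2\delta$; since $(\xs_1-\xs_2)(y)>2-\delta$ forces $\|y\|>1-\delta/2$, passing to $y/\|y\|$ changes each estimate by less than $\delta/2$, and a small $\delta$ finishes it.

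For (b) the same scheme works with convex combinations in place of slices. Implication $\Rightarrow$: let $C=\sum_{i=1}^n\lambda_i S(x_i,\eps)$ (discard null weights and shrink the radii to a common $\eps$) and fix $\eta>0$. Octahedrality applied to $\{x_1,\dots,x_n,-x_1,\dots,-x_n\}$ yields $y\in S_X$ with $\|x_i\pm y\|>2-\delta$ for all $i$; choosing $\phi_i^{\pm}\in S_{\Xs}$ as above gives $\phi_i^{\pm}\in S(x_i,\eps)$ with $\phi_i^+(y)>1-\delta$ and $\phi_i^-(y)<-1+\delta$, so $\psi^{\pm}:=\sum_i\lambda_i\phi_i^{\pm}\in C$ satisfy $\|\psi^+-\psi^-\|\ge(\psi^+-\psi^-)(y)>2-2\delta$. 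Implication $\Leftarrow$: fix $x_1,\dots,x_n\in S_X$ and $\eps>0$, and apply the weak$^*$-SD2P to $C=\tfrac1n\sum_{i=1}^n S(x_i,\delta)$ to obtain $\psi=\tfrac1n\sum_i\phi_i$ and $\psi'=\tfrac1n\sum_i\phi_i'$ in $C$ with $\|\psi-\psi'\|>2-\delta$, hence $y\in B_X$ with $\tfrac1n\sum_i(\phi_i(y)-\phi_i'(y))>2-\delta$. Here is the one new point: each summand is at most $2$, so an average exceeding $2-\delta$ forces $\phi_i(y)-\phi_i'(y)>2-n\delta$, hence $\phi_i(y)>1-n\delta$, for every $i$; therefore $\|x_i+y\|\ge\phi_i(x_i)+\phi_i(y)>2-(n+1)\delta$ simultaneously for all $i$, and after normalizing $y$ (valid since $\|y\|>1-\delta/2$) and choosing $\delta$ small relative to $\eps$ and $n$ we obtain $\|x_i+y\|>2-\eps$.

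The only genuinely routine ingredients are the $\eps$--$\delta$ bookkeeping and the rescaling of $y$, which is harmless because $\|x\pm y\|$ (resp. $(\xs_1-\xs_2)(y)$) near $2$ forces $\|y\|$ near $1$. I do not expect a real obstacle; the single point needing attention is in (b), where $\delta$ must be picked after --- and is allowed to depend on --- the number $n$ of slices in the convex combination, which is unproblematic since $n$ is prescribed in each instance of the definition.
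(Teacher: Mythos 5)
The paper offers no proof of this statement; it is quoted verbatim from \cite{HLP}, so there is nothing internal to compare against. Your argument is correct and is precisely the standard Hahn--Banach duality proof used in that reference: the norming-functional step in each forward implication, the observation that an average of terms each bounded by $2$ can only exceed $2-\delta$ if every term exceeds $2-n\delta$ (the one nontrivial point in (b) $\Leftarrow$), and the renormalization of $y$ (harmless because $(x^*_1-x^*_2)(y)>2-\delta$ forces $\|y\|>1-\delta/2$) are all sound, as is letting $\delta$ depend on $n$.
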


It follows from Theorem \ref{thm:mlur-d2p}, Theorem
\ref{prop:ld2p+-char} below, and Theorem \ref{thm:diam2-char} that for any
base $D=(D_n)_{n=1}^\infty$ of neighborhoods in $[0,1]$ the space
$X_D$ is locally octahedral. However, every such
space $X_D$ fails to be octahedral. To see this we will use the following lemma. 

\begin{lem}\label{lemma:n-norm}
  Let $u$ and $v$ be continuous functions on the unit
  interval. Suppose $\|u\|_n = \|v\|_n$ for every  $n \in
  \enn$. Then \[|u(t)| = |v(t)| \text{ for every } t \in [0,1].\] 
\end{lem}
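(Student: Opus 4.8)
The plan is to establish the pointwise identity $|u(t)|=|v(t)|$ separately at each point, using that the semi-norms $\|\cdot\|_n$ ``see'' the value at a point precisely because $D=(D_n)_{n=1}^\infty$ is a base for the topology of $[0,1]$.

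Fix $t_0\in[0,1]$ and $\eps>0$. First I would use the continuity of $u$ and $v$ at $t_0$ to choose a single open neighbourhood $U$ of $t_0$ on which both $|u(d)|<|u(t_0)|+\eps$ and $|v(d)|<|v(t_0)|+\eps$ hold for every $d\in U$. Since $(D_n)_{n=1}^\infty$ is a base for the topology, I can then pick an index $n\in\enn$ with $t_0\in D_n\subseteq U$ (if the $D_n$ are not assumed open, take instead a $D_n$ having $t_0$ in its interior with $D_n\subseteq U$). For this $n$, on the one hand $t_0\in D_n$ gives $\|u\|_n=\sup_{d\in D_n}|u(d)|\ge|u(t_0)|$, and on the other hand $D_n\subseteq U$ gives $\|u\|_n\le\sup_{d\in U}|u(d)|\le|u(t_0)|+\eps$; the same two estimates hold with $v$ in place of $u$.

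Now I would invoke the hypothesis $\|u\|_n=\|v\|_n$ and chain the inequalities,
\[
|u(t_0)|\ \le\ \|u\|_n\ =\ \|v\|_n\ \le\ |v(t_0)|+\eps,
\]
and symmetrically $|v(t_0)|\le|u(t_0)|+\eps$, so that $\bigl||u(t_0)|-|v(t_0)|\bigr|\le\eps$. Since $\eps>0$ is arbitrary, $|u(t_0)|=|v(t_0)|$, and as $t_0\in[0,1]$ was arbitrary the lemma follows.

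There is no genuinely hard step here; the only point that needs a little care is the selection of the index $n$, where one must use that $D$ is a base for the topology (not merely that its members cover $[0,1]$), so that some $D_n$ is actually contained in the continuity neighbourhood $U$, together with the obvious fact that every point of $[0,1]$ admits arbitrarily small basic neighbourhoods.
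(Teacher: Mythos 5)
Your proof is correct and follows essentially the same route as the paper's: choose a basic neighbourhood $D_n$ of the point small enough that continuity forces $\|u\|_n$ and $\|v\|_n$ to approximate $|u(t_0)|$ and $|v(t_0)|$, then invoke $\|u\|_n=\|v\|_n$. The paper phrases this slightly differently (uniform continuity plus near-maximizers $t',t''$ in $D_n$, giving a $4\eps$ bound) rather than your cleaner one-sided sandwich, but the underlying idea is identical.
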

 
\begin{proof}
  Let $\eps, \delta >0$ such that
  \begin{equation}
    \label{eq:6}
    |u(s') -u(s'')| < \eps \text{ and }   |v(s')-v(s'')| <\eps
  \end{equation}
   whenever $|s' - s''|<\delta$. Fix $t \in [0,1]$.  There exists $n
   \in \enn$ such that $t$ belongs to $D_n$ and $\diam(D_n) < \delta$. Now find
   $t', t''$ in $D_n$ such that $|\|u\|_n - |u(t')|| < \eps$ and $|\|v\|_n -
   |v(t'')|| < \eps$. Then $||u(t')| - |v(t'')|| < 2\eps$, and thus by (\ref{eq:6}) we have
   $||u(t)| - |v(t)|| < 4\eps$.
\end{proof}

\begin{prop}
  For any base $D=(D_n)_{n=1}^\infty$ of neighborhoods in
  $[0,1]$ the space $X_D$ fails to be octahedral.
\end{prop}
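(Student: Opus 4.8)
The plan is to show that octahedrality fails already when we restrict attention to just \emph{two} well-chosen norm-one functions, contradicting part (b) of the definition of octahedral. First I would pick $u \in S_{X_D}$ with the property that $u$ attains the supremum defining each seminorm $\|u\|_n$ at an \emph{interior} point of $D_n$ where $|u|$ is strictly positive; for instance one can take a function that is strictly positive on all of $[0,1]$, so that $\|u\|_n = \sup_{d\in D_n}|u(d)| > 0$ for every $n$. The key structural fact is Lemma~\ref{lemma:n-norm}: if $v \in S_{X_D}$ and $\|u+v\|_n$ is close to $2$ for all $n$ simultaneously (in the sense forced by octahedrality applied to the pair $\{u,u\}$, or more to the point to $\{u,-u\}$), then one is driven toward $\|v\|_n \approx \|u\|_n$, and hence toward $|v(t)| \approx |u(t)|$ pointwise. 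But then $\|u+v\|_\infty$ cannot be close to $2$ at the same time as $\|u-v\|_\infty$ is, since at each point $t$ we would need both $|u(t)+v(t)|$ and $|u(t)-v(t)|$ close to $2\|u\|_\infty$, which is impossible.

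More precisely, I would argue by contradiction: suppose $X_D$ is octahedral, fix the $u$ above and apply octahedrality to the two-point set $\{u,-u\} \subset S_{X_D}$ together with a small $\eps>0$, obtaining $v \in S_{X_D}$ with $\|u+v\|_D > 2-\eps$ and $\|u-v\|_D > 2-\eps$. Since $\|\cdot\|_D^2 = \sum 2^{-n}\|\cdot\|_n^2$ is a weighted $\ell_2$-sum of the seminorms and each $\|u\pm v\|_n \le \|u\|_n + \|v\|_n \le 2$, the inequality $\|u\pm v\|_D > 2-\eps$ forces, for every fixed $n$ and every $\eta>0$, the bound $\|u\pm v\|_n > 2-\eta$ once $\eps$ is small enough relative to the tail weights $\sum_{k>N}2^{-k}$ and to $n\le N$. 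Thus $\|u\|_n + \|v\|_n > 2 - \eta$, and combined with $\|u\|_n,\|v\|_n \le 1$ this gives $\|v\|_n > 1-\eta$ and (using that both seminorms are $\le 1$ and add up to nearly $2$) also $\|u\|_n$ and $\|v\|_n$ are both within $\eta$ of~$1$. Feeding $\|u\|_n \approx \|v\|_n \approx 1$ into the proof of Lemma~\ref{lemma:n-norm} — which only used that the seminorm values are nearly attained on small sets and controls oscillation — yields $|v(t)| > 1 - O(\eta)$ for every $t \in [0,1]$, so $|v|$ is uniformly close to the constant $1$; in particular $|v|$ and $|u|$ are both uniformly close to $1$.

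Finally I would extract the contradiction at a single point. Pick any $t_0$; we have $|u(t_0)| > 1 - O(\eta)$ and $|v(t_0)| > 1-O(\eta)$. Choosing $n$ with $t_0 \in D_n$ and $\diam(D_n)$ small, the seminorm estimate $\|u+v\|_n > 2-\eta$ together with the small oscillation of $u$ and $v$ over $D_n$ forces $|u(t_0)+v(t_0)| > 2 - O(\eta)$, and likewise $|u-v\|_n > 2-\eta$ forces $|u(t_0)-v(t_0)| > 2-O(\eta)$. But $|u(t_0)+v(t_0)| + |u(t_0)-v(t_0)| \le 2\max(|u(t_0)|,|v(t_0)|) \cdot \text{(something)}$ — more simply, for real scalars $a=u(t_0)$, $b=v(t_0)$ with $|a|,|b|\le 1$ one cannot have both $|a+b|>2-O(\eta)$ and $|a-b|>2-O(\eta)$, since the first forces $a,b$ of the same sign and both near $\pm 1$, while the second then forces $|a-b|$ near $0$. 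Taking $\eta$ (hence $\eps$) small enough makes this impossible, completing the proof.

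The main obstacle I anticipate is the bookkeeping in the second paragraph: converting the single $\ell_2$-type inequality $\|u\pm v\|_D > 2-\eps$ into \emph{uniform-in-$n$} (for $n \le N$) closeness of the seminorms, and then propagating "$\|v\|_n\approx 1$ for all $n\le N$" through Lemma~\ref{lemma:n-norm} to get a genuinely uniform pointwise lower bound $|v(t)| \approx 1$ on all of $[0,1]$. This requires choosing $N$ first (depending on the target oscillation $\delta$ and on $b$), then choosing $\eps$ small depending on $N$; the choice of $u$ strictly positive is what guarantees $\|u\|_n$ is bounded away from $0$ uniformly, which is needed so that "$\|u\|_n+\|v\|_n$ near $2$" actually pins down both terms. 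Everything else is a routine $\eps$-$\delta$ argument riding on Lemma~\ref{lemma:n-norm}.
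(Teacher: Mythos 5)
There is a fatal flaw in the strategy itself: applying octahedrality to the two-point set $\{u,-u\}$ only uses the statement ``there exists $v\in S_{X_D}$ with $\|u+v\|_D>2-\eps$ and $\|u-v\|_D>2-\eps$'', which is precisely \emph{local} octahedrality at $u$ --- and $X_D$ \emph{is} locally octahedral (the paper deduces this from the LD2P+ via Theorem \ref{prop:ld2p+-char} and Theorem \ref{thm:diam2-char}; one can also see it directly by taking $v$ to be a rapidly oscillating function of amplitude $\approx 1$, which satisfies $\|u\pm v\|_n\approx\|u\|_n+1$ for all $n\le N$). So no contradiction can possibly be extracted from that instance, and your argument must break somewhere. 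It breaks at the localization step: the witness $v=v_\eps$ is not a fixed function, so its modulus of continuity is not under control, and the suprema defining $\|u+v\|_n$ and $\|u-v\|_n$ over $D_n$ are attained at \emph{different} points of $D_n$. A function $v$ oscillating on a scale much finer than $\diam(D_n)$ achieves $\sup_{D_n}|u+v|\approx\|u\|_n+\|v\|_n$ and $\sup_{D_n}|u-v|\approx\|u\|_n+\|v\|_n$ simultaneously without $|u(t_0)\pm v(t_0)|$ both being large at any single $t_0$. Note that Lemma \ref{lemma:n-norm} is proved using the uniform continuity of \emph{both} functions involved; it cannot be applied (even approximately) with $v_\eps$ in one of the slots. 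A secondary inaccuracy: $\|u\|_n\le 1$ is false in general (one only has $\|u\|_n\le\|u\|_\infty\le 1/b$), so the reduction to ``$\|u\|_n,\|v\|_n$ within $\eta$ of $1$'' is also unjustified; the convexity argument only yields $\|v\|_n\approx\|u\|_n$.

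The paper's proof avoids all of this by testing octahedrality against two \emph{different} fixed nonnegative norm-one functions $u$ and $v$. A common witness sequence $(y_k)$ with $\|u+y_k\|_D\to 2$ and $\|v+y_k\|_D\to 2$ forces, by the convexity fact, the exact equalities $\|u\|_n=\lim_k\|y_k\|_n=\|v\|_n$ for every $n$; then Lemma \ref{lemma:n-norm}, applied to the two fixed (hence uniformly continuous) functions $u$ and $v$, gives $|u(t)|=|v(t)|$ everywhere, contradicting $u\neq v$, $u,v\ge 0$. The moral is that the obstruction to octahedrality here is genuinely a two-function phenomenon: the $n$-seminorm profile $(\|y\|_n)_n$ of a common witness would have to match two different profiles at once.
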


\begin{proof}
  Choose two different non negative norm 1 functions $u$ and $v$ in
  $X_D$. Assume there exists a sequence $(y_k)_{k=1}^\infty \subset S_X$ such
 that
 \begin{equation}
   \label{eq:5}
    \lim_{k \to \infty}\|u + y_k\|_D =  2 \text{ and } \lim_{k \to
      \infty}\|v+ y_k\|_D = 2.   
 \end{equation}
Using (\ref{eq:5}) and \cite[Fact II.~2.~3]{MR1211634} we have for
every $n \in \mathbb N$ that \[\|u\|_n = \lim_k\|y_k\|_n
=\|v\|_n.\] Now we get from
Lemma \ref{lemma:n-norm} a contradiction as $u$ and $v$ are non
negative and different.
\end{proof}

The final part of this section will be devoted to showing that 
there exists a Banach which is MLUR, has the D2P, the LD2P+, and has
convex combinations of slices with arbitrarily small diameter. First
we will show that for any given $\delta>0$ there exists a base $D =
(D_n)_{n=1}^\infty$ of neighborhoods in $[0,1]$ for which $B_{X_D}$ 
contains convex combinations of slices with diameter $< \delta$. In
that respect the following lemma will come in to use.

Let $t \in [0,1]$. Put $J(t) = \{n: t \in D_n\}$ and let $w(t) =\sum_{n \in J(t)}
 2^{-n}$. 

\begin{lem}\label{fact:dirac-norm}
  Let $D=(D_n)_{n=1}^\infty$ be a base of neighborhoods in
  $[0,1]$, $t \in [0,1]$, and $\delta_t$ the point
  measure in $X^*_D$. If $\overline{D_n} \cap
  \{t\} = \emptyset$ for every $n \not\in J(t)$,
  then \[\|\delta_t\|^*_D =\frac{1}{\sqrt{w(t)}},\] where
  $\|\cdot\|^*_{D}$ is the norm in $X^*_D.$
\end{lem}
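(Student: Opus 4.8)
The strategy is to compute the norm $\|\delta_t\|^*_D$ directly from the definition of a dual norm, using the explicit form of $\|\cdot\|_D$. Recall $\|x\|_D = (\sum_n 2^{-n}\|x\|_n^2)^{1/2}$, so the goal is to show
\[
\sup\set{|x(t)| : \sum_{n=1}^\infty 2^{-n}\|x\|_n^2 \le 1} = \frac{1}{\sqrt{w(t)}},
\]
where $\|x\|_n = \sup_{d\in D_n}|x(d)|$. The inequality $\|\delta_t\|^*_D \ge 1/\sqrt{w(t)}$ is the easy half: for each $n \in J(t)$ we have $|x(t)| \le \|x\|_n$, hence
\[
1 \ge \sum_{n=1}^\infty 2^{-n}\|x\|_n^2 \ge \sum_{n\in J(t)} 2^{-n}\|x\|_n^2 \ge |x(t)|^2 \sum_{n\in J(t)} 2^{-n} = w(t)\,|x(t)|^2,
\]
so $|x(t)| \le 1/\sqrt{w(t)}$ for every $x \in B_{X_D}$, giving $\|\delta_t\|^*_D \le 1/\sqrt{w(t)}$ — wait, this is the upper bound, not the lower one; I would simply note that this same chain of inequalities already proves $\|\delta_t\|^*_D \le 1/\sqrt{w(t)}$.

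For the reverse inequality $\|\delta_t\|^*_D \ge 1/\sqrt{w(t)}$ I would exhibit (approximately) norm-one functions $x$ with $x(t)$ close to $1/\sqrt{w(t)}$. The natural candidate is a function peaking at $t$: take $x$ with $x(t) = c$, supported near $t$, and small everywhere else. The hypothesis $\overline{D_n}\cap\{t\} = \emptyset$ for $n\notin J(t)$ is exactly what makes this work: since $[0,1]$ is compact and there are only countably many $D_n$, for any finite truncation level $N$ one can choose a neighborhood $U$ of $t$ small enough that $U$ is disjoint from $\overline{D_n}$ for all $n \le N$ with $n \notin J(t)$; then a continuous bump function $x$ with $x(t) = c$, $0 \le x \le c$, and $x$ supported in $U$ satisfies $\|x\|_n = 0$ for those finitely many $n$, while $\|x\|_n \le c$ always. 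Hence
\[
\|x\|_D^2 = \sum_{n=1}^\infty 2^{-n}\|x\|_n^2 \le c^2\Big(\sum_{n\in J(t)} 2^{-n} + \sum_{\substack{n> N \\ n\notin J(t)}} 2^{-n}\Big) \le c^2\big(w(t) + 2^{-N}\big).
\]
Choosing $c = (w(t)+2^{-N})^{-1/2}$ makes $\|x\|_D \le 1$, and then $\delta_t(x) = x(t) = c = (w(t)+2^{-N})^{-1/2} \to 1/\sqrt{w(t)}$ as $N\to\infty$. This yields $\|\delta_t\|^*_D \ge 1/\sqrt{w(t)}$, completing the proof.

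The one point requiring a little care — and the only genuine obstacle — is the construction of the bump function: one must ensure that the open set $U\ni t$ can be chosen disjoint from the closed sets $\overline{D_n}$ for the relevant finitely many indices $n\le N$ with $n\notin J(t)$. This is where the hypothesis is used, together with finiteness of the index set at level $N$ (each $\overline{D_n}$ is closed and misses $t$, so $t$ has a neighborhood avoiding it, and a finite intersection of such neighborhoods is again a neighborhood of $t$). Note one must also be slightly careful that $x$ may be nonzero on some $D_n$ with $n\in J(t)$ even outside $U$ is not an issue since we only use $\|x\|_n\le c$ there; and for $n>N$ with $n\notin J(t)$ we do not control $\|x\|_n$ beyond the trivial bound $c$, which is why the $2^{-N}$ tail term appears and why we must let $N\to\infty$. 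Everything else is a routine estimate.
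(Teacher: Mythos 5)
Your argument is correct and follows essentially the same route as the paper: the upper bound comes from the identical chain $1\ge\sum_{n\in J(t)}2^{-n}\|x\|_n^2\ge w(t)|x(t)|^2$, and the lower bound from a bump function at $t$ that vanishes on the finitely many $\overline{D_n}$, $n\le N$, $n\notin J(t)$ (possible by the hypothesis), with the remaining tail controlled by summability of the weights $2^{-n}$. Your explicit truncation level $N$ and the choice $c=(w(t)+2^{-N})^{-1/2}$ just make quantitative what the paper phrases as ``for any $\eps>0$ we can find $x\in S_{X_D}$ with $\sum_{n\notin J(t)}2^{-n}\|x\|_n^2<\eps$.''
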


\begin{proof}
  Let $x \in X_D$ with norm $1$. Then
  \begin{align*}
    1  = \sum_{n=1}^\infty 2^{-n}\|x\|_n^2 \ge \sum_{n \in J(t)}
    2^{-n} |x(t)|^2 = w(t) |\delta_t(x)|^2.
   \end{align*}
  Thus $\|\delta_t\|^*_D \le \frac{1}{\sqrt{w(t)}}.$ Moreover, by the
  assumptions it is always possible to find for
  $i \not\in J(t)$ an open set which contains
  $t$ and which does not intersect $\overline{D_i}$. Thus we can
  always find an $x \in S_{X_D}$ which takes its maximum value at
  $t$ and which is zero on $D_i$. From this it follows that for any
  $\eps >0$ we can
  find $x \in S_{X_D}$ which takes its maximum value at $t$ such that
  $\sum_{n \not \in J(t)}2^{-n}\|x\|_n^2 < \eps$. From the inequality 
  \begin{align*}
    1 = \|x\|_D &= \sum_{n \in J(t)}2^{-n}x(t)^2 + \sum_{n \not\in
                  J(t)}2^{-n}\|x\|^2_n\\
                      & < \sum_{n \in J(t)}2^{-n}x(t)^2 + \eps
  \end{align*}
  we get that $\delta_t^2(x) > \frac{1-\eps}{w(t)}$. Thus we can
  conclude that $\|\delta_t\|^*_D = \frac{1}{\sqrt{w(t)}}.$
\end{proof}

Let $(\eps_n)_{n=1}^\infty$ (with $\eps_1$ small!) be a strictly
decreasing sequence of positive real numbers converging fast to $0$. For
each $i \in \enn$ let us define a base of neighborhoods
$(D_{i,n})_{n=1}^\infty$ in $[0,1]$: Let $i=1$ and \[D_{1,1} =
[0,2^{-1} + \eps_1), D_{1,2} =
(2^{-1} - \eps_2,1].\] We call this the first level. For the second level put
\[D_{1,3} = [0,2^{-2} + \eps_3), D_{1,4} = (2^{-2} -
\eps_4,2\cdot 2^{-2} + \eps_4),\] \[D_{1,5} =
(2\cdot 2^{-2} - \eps_5,3\cdot 2^{-2} + \eps_5), D_{1,6} = (3\cdot 2^{-2} - \eps_6,1].\] Continue in this
fashion to obtain the base $(D_{1,n})_{n=1}^\infty$ consisting of
open intervals in $[0,1]$. Finally
let $D_i=(D_{i,n})_{n=1}^\infty$ be the base of $[0,1]$ consisting of the intervals in
$(D_{1,n})_{n=1}^\infty$ starting from level $i$.   

We will prove that for $i \ge 2$ the space $X_{D_i}$ fails to have the
SD2P. In fact, we will prove the following.

\begin{prop}\label{prop:snorm-nonSD2P}
 For each $i \ge 2$ let $X_{D_i}$ be the space $C[0,1]$ with the norm
 $\|\cdot\|_{D_i}$. Then for every $\eps > 0$ there exists finite convex combinations
 of slices of $B_{X_{D_i}}$ with diameter at most $\frac{\sqrt{i + \eps}}{i}$. 
\end{prop}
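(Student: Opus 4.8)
The plan is to build, for each fixed $i \ge 2$, a convex combination $C = \frac1i\sum_{k=1}^i S(m_k,\eps')$ of $i$ slices of $B_{X_{D_i}}$ whose diameter is controlled by the norms of the functionals $m_k$. The natural candidates for the $m_k$ are normalized point masses $\delta_{t_k}^*/\|\delta_{t_k}\|^*_{D_i}$ at carefully chosen points $t_k$; Lemma~\ref{fact:dirac-norm} tells us $\|\delta_{t_k}\|^*_{D_i} = 1/\sqrt{w(t_k)}$, where $w(t_k)=\sum_{n\in J(t_k)} 2^{-n}$ and $J(t_k)$ is the set of indices $n$ with $t_k \in D_{i,n}$. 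So I would first pick the $t_k$ to be the $i$ interior dyadic ``junction'' points at level $i$ of the construction, i.e.\ the points $j\cdot 2^{-i}$, $1\le j\le i$ (there are exactly $i$ such points strictly between $0$ and $1$, since the level-$i$ intervals partition $[0,1]$ into $i$ pieces up to the $\eps_n$-overlaps). Each such point lies in exactly the relevant level-$i$ interval(s) and, by the rapid decay of the $\eps_n$, in essentially no later intervals, so $J(t_k)$ is (close to) a singleton from level $i$, giving $w(t_k)$ of order $2^{-i}$ and hence $\|\delta_{t_k}\|^*_{D_i}\approx 2^{i/2}$ — a large norm. That largeness is exactly what forces small diameter.

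The key mechanism is the standard lemma behind the SD2P/nonexistence results: if $y = \frac1i\sum y_k$ and $z=\frac1i\sum z_k$ are two points of $C$, then for each $k$, both $y_k$ and $z_k$ satisfy $m_k(\cdot) > 1-\eps'$, and since $\|m_k\|=1$ while $m_k = \delta_{t_k}/\|\delta_{t_k}\|^*_{D_i}$, this pins down $y_k(t_k)$ and $z_k(t_k)$ to within $\eps'\|\delta_{t_k}\|^*_{D_i} = \eps'\sqrt{w(t_k)}^{-1}$ of the value $\|\delta_{t_k}\|^*_{D_i}$; more usefully, it forces $\|x\|_n$ for the few indices $n\in J(t_k)$ to be nearly $1$ for $x\in\{y_k,z_k\}$, which in turn (since $\sum_n 2^{-n}\|x\|_n^2\le 1$) forces $\|x\|_n$ to be small for all $n\notin J(t_k)$. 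I would then estimate $\|y-z\|_{D_i}^2 = \sum_n 2^{-n}\|y-z\|_n^2$ by splitting the sum: on the indices belonging to some $J(t_k)$ the contribution is small because averaging over the $i$ slices dilutes each near-extreme coordinate by a factor $1/i$ (this is where the $1/i$ in the bound comes from), and on the remaining indices the contribution is small because every $y_k,z_k$ is small there. Collecting the two pieces yields $\|y-z\|_{D_i} \le \frac{\sqrt{i+\eps}}{i}$ for $\eps'$ chosen small enough depending on $\eps$ and $i$.

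The main obstacle, and the step requiring the most care, is the bookkeeping of the index sets $J(t)$: because the intervals $D_{i,n}$ are half-open with small $\eps_n$-overlaps, a junction point $t_k=j2^{-i}$ lies not only in its two adjacent level-$i$ intervals but also in finitely many deeper-level intervals, and I must verify that $\sum_{n\notin\{\text{level }i\}} 2^{-n}$ over those indices is negligible — this is precisely why $(\eps_n)$ is taken strictly decreasing and converging fast, and why the hypothesis $\overline{D_n}\cap\{t\}=\emptyset$ for $n\notin J(t)$ in Lemma~\ref{fact:dirac-norm} is arranged. A secondary subtlety is that the averaging argument must be done coordinatewise in $n$ and then reassembled under the square root, so I would first prove a clean scalar inequality — roughly, if $a_1,\dots,a_i\in[0,1]$ with at most one of them near $1$ and the rest near $0$, then $(\frac1i\sum a_k - \frac1i\sum b_k)^2$ summed against the weights $2^{-n}$ stays below the target — and only then feed in the norm estimates. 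Once the index bookkeeping is pinned down, the rest is a routine triangle-inequality-plus-Cauchy–Schwarz computation.
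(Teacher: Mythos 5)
Your overall mechanism is exactly the paper's: take the slices determined by the normalized point evaluations $\delta_{t_k}/\|\delta_{t_k}\|^*_{D_i}$ (with norms given by Lemma \ref{fact:dirac-norm}), observe that membership in the $k$-th slice forces $\sum_{n\in J(t_k)}2^{-n}\|x\|_n^2$ to be close to $1$ and hence $\sum_{n\notin J(t_k)}2^{-n}\|x\|_n^2$ to be close to $0$, and then split the weighted sum over $n$ into the sets $J(t_k)$ (where only the $k$-th summand of the average can be large, giving the $1/i$ dilution) and their complement (where everything is small). The paper runs this estimate on $\|\tfrac1i\sum_k x_k\|_{D_i}$ rather than on the difference of two points of the convex combination, but that only costs a factor of $2$ and is immaterial for the application.

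The one step that would fail as written is your choice of points. The splitting argument needs the index sets $J(t_1),\dots,J(t_i)$ to be pairwise disjoint --- this is precisely the hypothesis the paper imposes (and realizes for $i=2$ with $t_1=0$, $t_2=1$): for $n\in J(t_k)$ you need every $x_l$ with $l\ne k$ to have $\|x_l\|_n$ small, which requires $n\notin J(t_l)$. Your points $t_j=j2^{-i}$, $1\le j\le i$, are \emph{consecutive} dyadic junction points of level $i$, and because of the $\eps_n$-overlaps each level-$i$ interval $(j2^{-i}-\eps_n,\,(j+1)2^{-i}+\eps_n)$ contains both adjacent junction points $j2^{-i}$ and $(j+1)2^{-i}$; hence $J(t_j)\cap J(t_{j+1})\neq\emptyset$, and for indices $n$ in the overlap your ``at most one of them near $1$'' scalar inequality is false, so the stated bound $\frac{\sqrt{i+\eps}}{i}$ is not reached by the computation you describe. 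The fix is easy: take non-adjacent junction points, e.g.\ $j2^{-i}$ for odd $j$, of which there are $2^{i-1}\ge i$. Two smaller slips: level $i$ consists of $2^i$ intervals, not $i$; and a dyadic junction point of level $i$ is a junction point of every deeper level too, so $J(t)$ is infinite (two indices per level $\ge i$) rather than ``essentially a singleton'' --- harmless, since the closure condition of Lemma \ref{fact:dirac-norm} still holds and only the value $w(t)$ enters.
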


\begin{proof}
  First suppose $i=2$ and choose $t_1 = 0$ and $t_2=1$ and note that
  $J(t_1) \cap J(t_2) = \emptyset$, $\{t_1\} \cap \overline{D_n} =
  \emptyset$ for every $n \not\in J(t_1)$, and $\{t_2\} \cap \overline{D_n} =
  \emptyset$ for every $n \not\in J(t_2)$. Put $M=\sup\{\|x\|_\infty:
  x \in B_{X_{D_2}}\} < \infty$. By a similar argument as in the last
  part of the proof of Lemma \ref{fact:dirac-norm} it is
  possible to choose, for any $\eps >0$, a $\eta >0$ such that
  \begin{align*}
    \sum_{n \not\in J(t_1)} 2^{-n}(2M\|x\|_{2,n}+ \|x\|^2_{2,n}) <
    \eps/3, 
    \sum_{n \not\in J(t_2)} 2^{-n}(2M\|y\|_{2,n} + \|y\|^2_{2,n})  < \eps/3,
  \end{align*}
  and 
  \begin{align*}
    \sum_{n \not\in J(t_1) \cup J(t_2)} 2^{-n}(\|x\|^2_{2,n} + 2\|x\|_{2,n}\|y\|_{2,n} + \|y\|^2_{2,n})  < \eps/3.
  \end{align*}
  whenever $x$ and $y$ are elements in the slices
  $S(\delta_{t_1}/\|\delta_{t_1}\|^*_{D_1}, \eta)$ and
  $S(\delta_{t_2}/\|\delta_{t_2}\|^*_{D_2},\eta)$ of $B_{X_{D_2}}$,
  respectively.
  Now, if we put $h = \frac{1}{2}x + \frac{1}{2}y$ we get
  \begin{align*}
    2^2\|h\|^2_{D_2} &= \sum_{n=1}^\infty 2^{-n}\|x + y\|^2_{2,n}\\
                & \le \sum_{n \in J(t_1)}
                  2^{-n}(\|x\|^2_{2,n} +2\|x\|_{2,n}\|y\|_{2,n}+ \|y\|^2_{2,n})\\ &+\sum_{n \in J(t_2)}      2^{-n}(\|x\|^2_{2,n} +2\|x\|_{2,n}\|y\|_{2,n}+
                  \|y\|^2_{2,n})\\ &+ \sum_{n \not \in J(t_1) \cup
                                      J(t_2)} 2^{-n}(\|x\|^2_{2,n}
                                      +2\|x\|_{2,n}\|y\|_{2,n}+
                                      \|y\|^2_{2,n})\\ 
                & \le 2 + \eps.
  \end{align*}
  For an arbitrary $i \ge 2$ we can in $[0,1]$ choose $i$ points
  $(t_k)_{k=1}^i$ such that $J(t_j) \cap J(t_k) = \emptyset$ for any
  $j \not= k$ and such that $\{t_k\} \cap \overline{D_n} =
  \emptyset$ for every $n \not\in J(t_k)$. Using a similar argument as
  for $i=2$ we get that for any $\eps > 0$ there
  exists for every $k = 1, \ldots, i$ a slice $S(\delta_{t_k}, \eta)$ of
  $B_{X_{D_i}}$ such that the convex combination \[\sum_{k=1}^i
  \frac{1}{i}S(\delta_{t_k}, \eta)\] has diameter at most
  $\frac{\sqrt{i+\eps}}{i}$.  
\end{proof}

\begin{thm}
  The space $\ell_2- \bigoplus_{i=1}^\infty X_{D_i}$ is MLUR, has the D2P,
  the LD2P+, and has convex combinations of slices of arbitrary small
  diameter.
\end{thm}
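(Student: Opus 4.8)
The plan is to verify the four properties one at a time for the $\ell_2$-sum $Z = \ell_2\text{-}\bigoplus_{i=1}^\infty X_{D_i}$, using the facts already established for the coordinate spaces $X_{D_i}$ together with standard permanence results for $\ell_2$-sums.

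\textbf{MLUR.} First I would recall that the $\ell_2$-sum of MLUR spaces is MLUR. This is a classical fact: if $\|x \pm y_k\| \to \|x\|$ in $Z$, then by strict convexity of the $\ell_2$-norm on $\mathbb{R}^{\enn}$ one gets, coordinatewise, that $\|x_i \pm y_k^{(i)}\|_{D_i} \to \|x_i\|_{D_i}$ for each $i$, and also that $\|y_k\| \to 0$ provided one controls the tail. The clean way is: write $f(t) = \|x + ty\|^2$-type convexity, or simply invoke that $\ell_2$ is MLUR (indeed LUR) and that an $\ell_2$-sum of MLUR spaces is MLUR --- this can be cited or proved in two lines from Proposition~\ref{prop:mlurX} applied in each coordinate plus a uniform-tail estimate coming from $x \in S_Z$. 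So each summand being MLUR (Proposition~\ref{prop:mlurX}) gives $Z$ MLUR.

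\textbf{LD2P+ and D2P.} The key observation is that each $X_{D_i}$ has the LD2P+ (Proposition~\ref{prop:ld2pX}), hence is locally octahedral by Theorem~\ref{thm:diam2-char}(a) together with the characterization announced as Theorem~\ref{prop:ld2p+-char}. Local octahedrality passes to $\ell_2$-sums: given $x = (x_i) \in S_Z$ and $\eps > 0$, pick one index $i_0$ with $\|x_{i_0}\|_{D_{i_0}}$ close to its own norm, use local octahedrality of $X_{D_{i_0}}$ to find $y_{i_0} \in S_{X_{D_{i_0}}}$ with $\|x_{i_0} \pm y_{i_0}\|$ close to $2\|x_{i_0}\|$... actually the cleaner route, and the one I would take, is to prove the LD2P+ directly for $Z$: take a slice $S(z^*,\eps)$ and a norm-one $x = (x_i)$ inside it; perturb only a single, high coordinate $i_0$ on which $z^*$ has small norm and $x$ has small norm, replacing $x_{i_0}$ by $x_{i_0} + w$ and $x_{i_0} - w$ for a suitable $w \in X_{D_{i_0}}$ of norm nearly $\sqrt{1 - \|x\|^2 \text{(truncated)}}$... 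Since $X_{D_i}$ itself has the LD2P+, one can instead keep $x$ fixed and only modify one coordinate using the LD2P+ of that $X_{D_i}$ to move nearly $2$ away while staying in the slice. Because $x \in S_Z$ and the $\ell_2$-norm is strictly convex, the contribution of the chosen coordinate is small, so one cannot quite get distance $2$ that way. Therefore the correct mechanism is: since $X_{D_i}$ has the LD2P+ it is locally octahedral; an $\ell_2$-sum of locally octahedral spaces is locally octahedral (this is known, and follows by perturbing all coordinates simultaneously, exploiting that for the $\ell_2$-norm $\|(a_i) + (b_i)\| \ge \|(a_i)\|$ when the $b_i$ align suitably); then applying Theorem~\ref{prop:ld2p+-char} in reverse gives $Z$ the LD2P+, and Proposition~\ref{prop:prop0} with MLUR gives the D2P. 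I would state the $\ell_2$-sum permanence of local octahedrality (or of the LD2P+) as a small lemma and prove it by the standard alignment argument.

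\textbf{Convex combinations of slices of arbitrarily small diameter.} This is the step I expect to be the main obstacle, because it must be reconciled with the LD2P+ (which forbids slices from being small). The idea: by Proposition~\ref{prop:snorm-nonSD2P}, for every $\delta > 0$ there is an $i$ with $X_{D_i}$ admitting a finite convex combination of slices, say $C = \sum_{k} \frac1i S(\delta_{t_k},\eta) \subset B_{X_{D_i}}$, of diameter at most $\frac{\sqrt{i+\eps}}{i}$, which tends to $0$ as $i \to \infty$. Now lift these to $Z$: a functional on $X_{D_i}$ extends to $Z$ by composing with the $i$-th coordinate projection $P_i : Z \to X_{D_i}$ (which has norm one), so each slice $S(\delta_{t_k},\eta)$ of $B_{X_{D_i}}$ pulls back to a slice $\widetilde{S}_k = \{z \in B_Z : \delta_{t_k}(P_i z)/\|\delta_{t_k}\| > 1 - \eta\}$ of $B_Z$. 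The convex combination $\sum_k \frac1i \widetilde{S}_k$ consists of $z \in B_Z$ whose $i$-th coordinate lies in $C$ (up to the constraint that the other coordinates can be nonzero). The point is that if $z \in \widetilde{S}_k$ then $\|P_i z\|$ is forced close to $1$ (because $\|\delta_{t_k}\|^* = 1/\sqrt{w(t_k)}$ is computed by near-extremal functions, as in Lemma~\ref{fact:dirac-norm}), hence by the $\ell_2$-norm identity $\|z\|^2 = \|P_i z\|^2 + \sum_{j\ne i}\|P_j z\|^2 \le 1$ the off-coordinates $\sum_{j \ne i}\|P_j z\|^2$ are small. Thus any element of $\sum_k \frac1i \widetilde{S}_k$ has $i$-th coordinate within $\diam(C)$ of a fixed point of $C$ and all other coordinates of small $\ell_2$-norm, so the whole convex combination has diameter $\le \diam(C) + (\text{small}) \to 0$. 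I would carry out this estimate carefully, choosing $\eta$ small enough (depending on $i$) to make the off-coordinate mass $< \delta$; the bookkeeping between the $\eps$ in Proposition~\ref{prop:snorm-nonSD2P}, the slice width $\eta$, and the $\ell_2$-tail is the only delicate part, and it is routine once organized. Finally, assembling: $Z$ is MLUR, has the LD2P+ (hence the D2P), and has convex combinations of slices of arbitrarily small diameter, which is the assertion.
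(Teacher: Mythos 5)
Your treatment of the MLUR part and of the convex combinations of small diameter matches the paper's: the paper cites stability of MLUR under $\ell_2$-sums, and for the last property it writes $Z = X_{D_i}\oplus_2 Y_i$, observes that any $z$ in a slice determined by a functional of the form $(x^*_i,0)$ has $\|P_i z\|>1-\delta$ and hence $\ell_2$-tail of norm at most $(2\delta-\delta^2)^{1/2}$, and then invokes Proposition \ref{prop:snorm-nonSD2P} exactly as you do. That portion of your proposal is correct.

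There is, however, a genuine gap in your argument for the LD2P+. The mechanism you finally commit to is: each $X_{D_i}$ has the LD2P+, hence is locally octahedral; local octahedrality passes to $\ell_2$-sums; and then ``applying Theorem~\ref{prop:ld2p+-char} in reverse'' gives $Z$ the LD2P+. The last step is not valid. None of the equivalent conditions in Theorem \ref{prop:ld2p+-char} is local octahedrality: condition (b) there is a weak$^*$ \emph{LD2P+} statement for $X^*$ (every norm-one functional \emph{inside} a given weak$^*$ slice can be moved almost distance $2$ away while staying in the slice), whereas local octahedrality is equivalent, by Theorem \ref{thm:diam2-char}(a), only to the weak$^*$-LD2P of $X^*$ (the slice merely \emph{contains} two far-apart points). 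The LD2P+ implies local octahedrality, but you cannot run this implication backwards, and nothing in the paper (or in your sketch) establishes the converse. So local octahedrality of $Z$ does not deliver the LD2P+ of $Z$. Your earlier, discarded attempt correctly identified why perturbing a single coordinate fails; the repair is simply to use the known stability of the LD2P+ itself under unconditional (in particular $\ell_2$) sums, which is \cite[Theorem~3.2]{IK} (quoted in Section \ref{sec:ld2p+} of the paper: the LD2P+ is stable by taking unconditional sums), or to prove it directly by perturbing \emph{all} coordinates simultaneously with weights $\|y_i\|\approx\|x_i\|$. With the LD2P+ of $Z$ in hand, your derivation of the D2P from Proposition \ref{prop:prop0} (MLUR $\Rightarrow$ weak MLUR, LD2P+ $\Rightarrow$ LD2P) is fine, and agrees in substance with the paper.
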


\begin{proof}
  The properties of being MLUR, having the D2P, and having the LD2P+
  are all stable by taking $\ell_2$-sums (see
  \cite[Theorem~3.2]{MR3098474} and \cite[Theorem~3.2]{IK} for the
  latter two). Thus the space $\ell_2- \bigoplus_{i=1}^\infty X_{D_i}$
  has to possess all these properties as well since each $X_{D_i}$
  does. So, what is left to prove is that the unit ball of $\ell_2-
  \bigoplus_{i=1}^\infty X_{D_i}$ has finite convex combinations of
  slices with arbitrary small diameter. To this end let $Z = X_{D_i}
  \oplus_2 Y_i$ where $Y_i = \ell_2-\bigoplus_{k\not=i} X_{D_i}$. Let
  $x^*_i \in S_{X^*_{D_i}}$, $S_i(x^*_i,\delta)$ a slice of
  $B_{X_{D_i}}$, and let $0 < \delta < \eta$. Now, if
  $(x_i,y_i)$ is in the slice
  $S((x^*_i,0),\delta)$ of $B_Z$, then $x^*_i(x_i) > 1-\delta$, and
  so $\|x_i\| > 1 - \delta$. Thus $\|y_i\|^2 \le 2\delta - \delta^2$.
  But this means that \[S((x^*_i,0),\delta) \subset S_i(x^*_i,\delta) \times
  (2\delta-\delta^2)^{1/2}B_{Y_i}.\] From this we see that if $z \in
  \sum_{j=1}^i \frac{1}{i}S_j(B_Z,(x^*_{i,j},0),\delta)$, then we can
  write $z = x + y$ where $x \in \sum_{j=1}^i \frac{1}{i}
  S_{i,j}(B_{X_{D_i}},x^*_{i,j},\delta)$ and $y \in
  (2\delta-\delta^2)^{1/2}B_{Y_i}$. Now, if the convex combination
  $\sum_{j=1}^i \frac{1}{i} S_{i,j}(B_{X_{D_i}},x^*_{i,j},\delta)$ is
  chosen so that its diameter is at most $\frac{\sqrt{i + \eta}}{i}$
  , which is possible by Proposition \ref{prop:snorm-nonSD2P}, we get
  that $\|x\| \le \frac{\sqrt{i+\eta}}{i}$ and $y \in   (2\delta -
  \delta^2)B_{Y_i}$. As $\|z\| \le  \|x\| + \|y\|$ and $i$ can be
  chosen as big as desired and $\eta > 0$ as small as desired, we are done. 
\end{proof}

\section{The local diameter 2 property +}\label{sec:ld2p+}

Let $X$ be a Banach space and $I$ the identity operator on $X$. Recall
that $X$ has the \emph{Daugavet property} if the equation 
\begin{equation}
  \label{eq:3}
   \|I+T\| =  1 + \|T\| 
\end{equation}
holds for every rank 1 operator $T$ on $X$. The Daugavet property
can be characterized as follows (see \cite{MR1856978} or \cite{MR1784413}):

\begin{thm}\label{thm:daugavet-char}
Let $X$ be a Banach space. Then the following statements are
equivalent.
\begin{itemize}
  \item[a)]$X$ has the Daugavet property.
  \item[b)]The equation $\|I + T\|= 1 + \|T\|$ holds for every weakly
      compact operator $T$ on $X$.
  \item[c)] For every $\eps > 0$, every $x \in
   S_X$, and every $x^* \in S_{X^*}$, there exists $y \in S(x^*,\eps)$ such that
   $\|x + y\| \ge 2 - \eps$.
  \item[d)] For every $\eps > 0$, every $x^* \in
   S_{X^*}$, and every $x \in S_X$, there exists $y^* \in S(x,\eps)$ such that
   $\|x^* + y^*\| \ge 2 - \eps$.
  \item[e)] For every $\eps > 0$ and every $x \in S_X$ we have $B_X =
    \ncconv(\Delta_\eps(x))$ where $\Delta_\eps(x)=\{y \in B_X: \|y-x\|
    \ge 2-\eps\}$.
\end{itemize}  
\end{thm}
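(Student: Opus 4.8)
The plan is to establish the cycle (b) $\Rightarrow$ (a) $\Rightarrow$ (c) $\Rightarrow$ (e) $\Rightarrow$ (b), together with the separate equivalence (a) $\Leftrightarrow$ (d); only the step (e) $\Rightarrow$ (b) is substantial.

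The implication (b) $\Rightarrow$ (a) is immediate, since rank $1$ operators are weakly compact. For (a) $\Rightarrow$ (c) I would use the familiar rank $1$ computation: given $x \in S_X$, $x^* \in S_{X^*}$ and $\eps > 0$, the operator $Tz = x^*(z)x$ has norm $1$, so (a) produces $z \in B_X$ with $\|z + x^*(z)x\| > 2 - \eps$; this forces $|x^*(z)|$ to be close to $1$, and, after replacing $z$ by $\pm z$, one reads off a $y \in S(x^*,\eps)$ (up to a harmless rescaling of $\eps$) with $\|x + y\| \ge 2 - \eps$. Running the identical estimate on the dual, via $(I+T)^* = I + T^*$ with $T^*w^* = w^*(x)x^*$, gives (a) $\Rightarrow$ (d). The converses (c) $\Rightarrow$ (a) and (d) $\Rightarrow$ (a) simply reverse these inequalities applied to a norm $1$ rank $1$ operator written as $T = x^* \otimes x$. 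Finally (c) $\Leftrightarrow$ (e) is a Hahn--Banach separation: a closed convex set $C \subseteq B_X$ coincides with $B_X$ precisely when every slice of $B_X$ meets $C$, and (c) read with $-x$ in place of $x$ says exactly that every slice of $B_X$ meets $\Delta_\eps(x)$.

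The substance is (e) $\Rightarrow$ (b). Let $T$ be weakly compact, $M = \|T\| > 0$ (the case $T = 0$ is trivial), and fix a small $\delta \in (0,M)$; it suffices to produce $y \in B_X$ with $\|y + Ty\|$ within $O(\delta)$ of $1 + M$, since then $\|I + T\| \ge \|(I+T)y\|$ and, $\delta$ being arbitrary, $\|I+T\| \ge 1 + M$, while $\|I + T\| \le 1 + \|T\|$ is automatic. The idea is to find a single slice $U$ of $B_X$ on which $T$ is simultaneously nearly norm-maximal and nearly constant. Put $L = \ncconv(T(B_X))$. Since $T$ is weakly compact, $T(B_X)$ is relatively weakly compact, so $L$ is weakly compact by the Krein--Smulian theorem; hence $L$ has the Radon--Nikod\'ym property and therefore equals the closed convex hull of its strongly exposed points. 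As $\sup_{v\in L}\|v\| = M$, these points cannot all have norm at most $M - \delta$, so some strongly exposed point $w$ of $L$ has $\|w\| > M - \delta$; let $y^* \in S_{X^*}$ strongly expose it. Choose $\gamma > 0$ so small that $W := \{v \in L : y^*(v) > \sup_L y^* - \gamma\}$ has diameter $< \delta$ and consists of vectors of norm $> M - \delta$, and set $s := \sup_L y^* = \|T^* y^*\|$ (necessarily $s > 0$, since otherwise $W$ would be all of $L$, of diameter $\ge M$). Then $U := S(T^*y^*/s,\gamma/s) = \{x \in B_X : (T^*y^*)(x) > s - \gamma\}$ is a slice of $B_X$ with $T(U) \subseteq W$; in particular $\|Tx\| > M - \delta$ for all $x \in U$ and $\|Tx - Tx'\| < \delta$ for all $x,x' \in U$. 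Now fix $x_0 \in U$ and apply (c) to the unit vector $Tx_0/\|Tx_0\|$ and the unit functional $T^*y^*/s$, obtaining $y \in U$ with $\|y + Tx_0/\|Tx_0\|\| > 2 - \delta$. Taking $\phi \in S_{X^*}$ that norms $y + Tx_0/\|Tx_0\|$, one has $\phi(y) > 1 - \delta$ and $\phi(Tx_0/\|Tx_0\|) > 1 - \delta$; since $Tx_0 = \|Tx_0\|\cdot(Tx_0/\|Tx_0\|)$ and $\|Ty - Tx_0\| < \delta$, this yields $\|y + Ty\| \ge \phi(y) + \|Tx_0\|\,\phi(Tx_0/\|Tx_0\|) - \|Ty - Tx_0\| > (1-\delta)(1 + M - \delta) - \delta$, which is the required estimate.

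I expect the real obstacle to be the construction of the slice $U$, that is, upgrading the relative weak compactness of $T(B_X)$ to honest geometry: a genuine slice of $B_X$ on which $T$ has small oscillation and $\|Tx\|$ is close to $\|T\|$. This is the only place where one goes beyond the rank $1$ (Daugavet) case, and it uses that weakly compact convex sets are the closed convex hulls of their strongly exposed points (via Krein--Smulian and the Radon--Nikod\'ym property); without this input one obtains only the equivalences (a) $\Leftrightarrow$ (c) $\Leftrightarrow$ (d) $\Leftrightarrow$ (e). A minor point to watch is that (c) governs only one slice at a time, so, when $\|T\| \ne 1$, one must estimate $\|y + Ty\|$ through the norming functional $\phi$ rather than by a triangle inequality against $\|y + Tx_0/\|Tx_0\|\|$.
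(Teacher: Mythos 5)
The paper gives no proof of this theorem but cites \cite{MR1856978} and \cite{MR1784413}, and your argument is correct and is essentially the standard one from those sources: the elementary rank-one computations for the equivalence of a), c), d) and e), and, for the weakly compact case, the localization of $T$ on a slice of $B_X$ obtained from a strongly exposed point of $\ncconv(T(B_X))$ of nearly maximal norm (using Krein--\v{S}mulian and the fact that weakly compact convex sets are the closed convex hulls of their strongly exposed points). The only blemish is your parenthetical justification that $s=\sup_L y^*>0$: if $s=0$ the slice $W$ need not be all of $L$, but since $0=T0\in L$ one would have $0\in W$ together with $w$, forcing $\diam W>M-\delta$, which is the contradiction you actually want.
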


Let us recall from the Introduction the definition of the
LD2P+ and at the same time introduce its weak$^*$ version. 
 
\begin{defn}
  We say that a Banach space $X$ has the \emph{local diameter 2
    property +} (LD2P+) if for every $x^* \in S_{X^*}$, every
  $\eps > 0$, every $\delta > 0$, and every $x \in
  S(x^*,\eps) \cap S_X$ there exists $y \in S(x^*,\eps)$ with $\|x
  -y\| > 2-\delta$. If $X$ is a dual space and the above holds for
  weak$^*$ slices $S(x^*,\eps)$, then $X$ is said to have the
  \emph{weak$^*$ local diameter 2 property + (weak$^*$-LD2P+)}.  
\end{defn}

From \cite[Theorem~1.4]{IK} and \cite[Open problem (7)
p.~95]{MR1856978} the following is known.

\begin{thm}\label{thm:ikw}
  Let $X$ be a Banach space. Then the following statements are
  equivalent.
  \begin{enumerate}
    \item [a)] The equation $\|I - P\| = 2$ holds for every norm-1 rank-1
      projection $P$ on $X$. 
    \item [b)] For every $\eps>0$, every $x^* \in S_{X^*}$ and every
      $x \in S(x^*,\eps)$ there exists $y \in S(x^*,\eps) \cap S_X$
      with $\|x-y\| > 2-\eps.$
    \item [c)] For every $x \in S_X$ and every $\eps >0$ we have $x
      \in \ncconv(\Delta_\eps(x))$ where $\Delta_\eps(x) = \{y \in
      B_X: \|x - y\| > 2 - \eps\}.$
  \end{enumerate}
\end{thm}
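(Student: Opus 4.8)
The statement is, modulo the substitution of ``norm-one rank-one \emph{projection}'' for ``rank-one operator'', the exact analogue of the Daugavet characterisation in Theorem~\ref{thm:daugavet-char}, and I would prove it by the same kind of cycle of implications, say (b)~$\Rightarrow$~(a)~$\Rightarrow$~(c)~$\Rightarrow$~(b). The one structural wrinkle is that idempotence is restrictive: a norm-one rank-one projection can be written $Py=\xs(y)x$, where $P^2=P$ forces $\xs(x)=1$, and then $\|\xs\|\,\|x\|=\|P\|=1$ together with $\xs(x)=1$ forces $x\in S_X$, $\xs\in S_{\Xs}$, $\xs(x)=1$. Hence (a) constrains only functionals that attain their norm together with a norming point, whereas (b) and (c) range over all of $S_{\Xs}$ (resp.\ all of $S_X$); closing this gap is where the work lies.

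For (b)~$\Rightarrow$~(a), let $P=\xs\otimes x$ be a norm-one rank-one projection as above. Since $x\in S(\xs,\eps)\cap S_X$ for every $\eps>0$, (b) produces $y\in S(\xs,\eps)\cap S_X$ with $\|x-y\|>2-\eps$, and then $\|(I-P)y\|=\|y-\xs(y)x\|\ge\|x-y\|-(1-\xs(y))>2-2\eps$; letting $\eps\to0$ gives $\|I-P\|\ge2$, while $\|I-P\|\le1+\|P\|=2$ is automatic. (The same one-line estimate, fed a point $y_i$ with $\xs(y_i)$ near $1$ extracted from an approximating convex combination $\sum_i\lambda_iy_i\approx x$ with $y_i\in\Delta_\eps(x)$, gives (c)~$\Rightarrow$~(a) directly.)

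For (a)~$\Rightarrow$~(c): if (c) fails, fix $x\in S_X$ and $\eps>0$ with $x\notin C:=\ncconv(\Delta_\eps(x))$, and separate by Hahn--Banach, getting $f\in S_{\Xs}$ with $\sup_Cf<f(x)$. Replacing $f$ by a convex combination $(1-\theta)f+\theta g$ with $g$ a norming functional of $x$ pushes the value at $x$ towards $\|x\|=1$ while keeping $C$ strictly below it, so we may take $f(x)$ as close to $1$ as we please. Since $f$ need not be norm-attaining, apply the Bishop--Phelps--Bollob\'as theorem to obtain a norm-attaining $g'\in S_{\Xs}$ close to $f$ and attaining its norm at a $z\in S_X$ close to $x$; then $P:=g'\otimes z$ is a genuine norm-one rank-one projection with $\|I-P\|<2$, because any $y\in B_X$ making $\|y-g'(y)z\|$ nearly $2$ must have $|g'(y)|$, hence $|f(y)|$, nearly $1$, so $y$ or $-y$ lies outside $C\supseteq\Delta_\eps(x)$, i.e.\ $\|x\mp y\|\le2-\eps$; combined with $z\approx x$ this bounds $\|y-g'(y)z\|$ away from $2$, contradicting (a). For (c)~$\Rightarrow$~(b), given $\xs\in S_{\Xs}$, $\eps>0$ and $x\in S(\xs,\eps)$ (so $\|x\|>1-\eps$), normalise to $\tilde x=x/\|x\|\in S_X$ and apply (c) to $\tilde x$ with a small parameter $\eps'$: were $\xs$ bounded away from $1$ on $\Delta_{\eps'}(\tilde x)$ it would be bounded away from $1$ on $\ncconv(\Delta_{\eps'}(\tilde x))\ni\tilde x$, contradicting $\xs(\tilde x)\ge\xs(x)>1-\eps$, so there is $y\in\Delta_{\eps'}(\tilde x)$ with $\xs(y)$ near $1$, and $y/\|y\|\in S(\xs,\eps)\cap S_X$ is then at distance close to $2$ from $x$. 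Throughout one uses the routine fact that conditions of the form ``for every $\eps>0$ there is $y$ with $\|x-y\|>2-\eps$'' are robust, interchangeable with their ``for every $\delta>0$ there is $y$ with $\|x-y\|>2-\delta$'' counterparts, which is what lets the several $\eps$'s be retuned against one another.

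The step I expect to be the real obstacle is (a)~$\Rightarrow$~(c): since (a) only ever speaks about norm-attaining functionals (through the projections), manufacturing a projection that violates (a) out of a Hahn--Banach separator forces one into a Bishop--Phelps--Bollob\'as argument, and then into tracking the interplay of the separation margin, the ``how close to $1$'' tilt, and the BPB error all at once. Once that is in hand the remaining implications are exactly the triangle-inequality bookkeeping already familiar from the Daugavet setting; this is presumably why the theorem is quoted from \cite{IK} and \cite{MR1856978} rather than reproved here.
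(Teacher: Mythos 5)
First, a point of reference: the paper does not prove this theorem at all -- it is imported from \cite{IK} (Theorem~1.4) together with Werner's survey \cite{MR1856978}, so there is no in-paper argument to measure yours against. Of your implications, (b)~$\Rightarrow$~(a) and (c)~$\Rightarrow$~(a) are correct, and (c)~$\Rightarrow$~(b) goes through with the routine renormalisation you indicate. The problem is (a)~$\Rightarrow$~(c), which is precisely the substantive content of the theorem (it is what resolves Werner's open problem), and there your Bishop--Phelps--Bollob\'as bookkeeping does not close.

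Concretely: suppose $x\notin C:=\ncconv(\Delta_\eps(x))$ and you have separated with $f\in S_{X^*}$. To conclude from $|g'(y)|\approx 1$ that $y$ (or $-y$) misses $C$ you need $\|g'-f\|$ to be smaller than the gap $1-\sup_C f$; but BPB only produces $g'$ with $\|g'-f\|$ of order $\sqrt{1-f(x)}$ (and moves the norming point $z$ by the same amount), so your scheme requires a separating functional with $1-f(x)\lesssim \bigl(1-\sup_C f\bigr)^2$. Hahn--Banach does not supply such an $f$, and the tilt $(1-\theta)f+\theta g$ makes matters worse rather than better: it reduces $1-\hat f_\theta(x)$ to order $1-\theta$ but simultaneously shrinks the separation margin $\hat f_\theta(x)-\sup_C\hat f_\theta$ to order $(1-\theta)m$, so the needed inequality degenerates to $\sqrt{1-\theta}\lesssim (1-\theta)m$, which fails for every $\theta$. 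Nor can you avoid the tilt/BPB by hoping some norming functional of $x$ already separates: applying (a) to $P=g\otimes x$ with $g(x)=1$ yields, after the usual sign flip, points $w_n\in B_X$ with $g(w_n)\to 1$ and $\|x-w_n\|\to 2$, hence $\sup_{\Delta_\eps(x)}g=1=g(x)$ for every norming $g$; so under hypothesis (a) the separating functional is forced to be tilted away from the norming face of $x$, which is exactly the regime where the quadratic BPB loss is fatal. A genuinely different idea is needed for (a)~$\Rightarrow$~(b)/(c); this step is not triangle-inequality bookkeeping, and you should take the argument from \cite{IK} rather than reconstruct it this way.
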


From Lemma \ref{lem:subslice} of Kadets and Ivakhno (see
\cite[Lemma~2.1]{IK}) stated below it is clear that the LD2P+ is
equivalent to the statements in Theorem \ref{thm:ikw}. Therefore
every Daugavet space has the LD2P+. Note, however, that the converse
is not true as the LD2P+ is stable by taking unconditional sums of
Banach spaces which fails for spaces with the Daugavet property (see
e.g. \cite[Corollary~3.3]{IK}).

\begin{lem}[Kadets and Ivakhno]\label{lem:subslice}
  Let $\eps>0$ and $x^* \in S_{X^*}$. Then for every $x \in S(x^*,\eps) \cap
  S_X$ and every positive $\delta < \eps$
  there exist $y^* \in S_{X^*}$ such that  $x \in S(y^*,\delta)$ and
  $S(y^*,\delta) \subset S(x^*,\eps).$
\end{lem}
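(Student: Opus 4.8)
The plan is to obtain $y^*$ by tilting $x^*$ slightly towards a norming functional of $x$. To begin with, if $x^*(x)>1-\delta$ there is nothing to prove: one may take $y^*=x^*$, since $\delta<\eps$ forces $S(x^*,\delta)\subseteq S(x^*,\eps)$ while $x\in S(x^*,\delta)$. So assume $x^*(x)\le 1-\delta$ and set $\gamma:=1-x^*(x)$, so that $\delta\le\gamma<\eps$. By the Hahn--Banach theorem choose $g\in S_{X^*}$ with $g(x)=1$.

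For a parameter $\lambda\in(0,1)$ consider $z^*_\lambda:=(1-\lambda)x^*+\lambda g$; then $\|z^*_\lambda\|\le 1$ and $z^*_\lambda(x)=1-(1-\lambda)\gamma$. The plan is to choose $\lambda$ so that the two requirements hold. First, $z^*_\lambda(x)>1-\delta$ holds exactly when $(1-\lambda)\gamma<\delta$, i.e. $\lambda>1-\delta/\gamma$. Secondly, suppose $y\in B_X$ and $z^*_\lambda(y)>1-\delta$; writing $z^*_\lambda(y)=(1-\lambda)x^*(y)+\lambda g(y)$ and using $g(y)\le 1$ one gets $x^*(y)>\frac{1-\delta-\lambda}{1-\lambda}$, and an elementary rearrangement shows the right-hand side is $\ge 1-\eps$ as soon as $\lambda\le 1-\delta/\eps$. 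Since $\gamma<\eps$, the interval $\bigl(1-\delta/\gamma,\,1-\delta/\eps\bigr]$ is non-empty; fixing $\lambda$ in it yields a functional $z^*_\lambda\in B_{X^*}$ with $x$ in the set $\{\,y\in B_X:z^*_\lambda(y)>1-\delta\,\}\subseteq S(x^*,\eps)$.

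It remains to replace $z^*_\lambda$ by a functional $y^*$ of norm exactly $1$ keeping both $x\in S(y^*,\delta)$ and $S(y^*,\delta)\subseteq S(x^*,\eps)$. I expect this to be the main obstacle: merely normalizing $z^*_\lambda$ dilates its slice and can break the inclusion into $S(x^*,\eps)$ when $\|z^*_\lambda\|$ is appreciably below $1$ (which is exactly the situation where $x$ sits very close to the ``face'' $\{x^*=1-\eps\}$ of $S(x^*,\eps)$). One remedies this by taking $y^*$ instead to be a norm-one support functional of $B_X$ whose narrow slice $S(y^*,\delta)$ is inscribed in $S(x^*,\eps)$, touching that face; since $x^*(x)>1-\eps$ is strict, such a slice still contains $x$. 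Once $y^*$ has been produced the two inclusions are immediate, so the whole difficulty is concentrated in this quantitative adjustment.
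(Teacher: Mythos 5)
The paper does not prove this lemma itself (it is quoted from Ivakhno--Kadets), so I can only judge your argument on its own terms. Your first two steps are fine: the reduction to the case $x^*(x)\le 1-\delta$, the computation $z^*_\lambda(x)=1-(1-\lambda)\gamma$, and the verification that any $\lambda\in\bigl(1-\delta/\gamma,\,1-\delta/\eps\bigr]$ gives $z^*_\lambda(x)>1-\delta$ together with $\{y\in B_X:z^*_\lambda(y)>1-\delta\}\subset S(x^*,\eps)$ are all correct. But the lemma asks for $y^*\in S_{X^*}$ and a slice of width exactly $\delta$, and this is precisely where your argument stops being a proof. What you have actually produced is the set $\{y\in B_X:z^*_\lambda(y)>1-\delta\}=S\bigl(z^*_\lambda/N,\delta'\bigr)$ with $N=\|z^*_\lambda\|\le 1$ and $\delta'=(N-1+\delta)/N\le\delta$; that is, a slice of \emph{some} width $\delta'\in(0,\delta]$ containing $x$ and contained in $S(x^*,\eps)$. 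Enlarging it to width $\delta$ destroys the inclusion, and shrinking the target does not match the statement.

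The gap is not a removable technicality within your framework. If you redo the two estimates for the normalized functional $y^*=z^*_\lambda/N$, using the only available lower bound $N\ge z^*_\lambda(x)=1-(1-\lambda)\gamma$, the requirement $\sup\{y^*(y):y\in B_X,\ x^*(y)\le 1-\eps\}\le 1-\delta$ becomes $(1-\lambda)\bigl(\eps-\gamma(1-\delta)\bigr)\ge\delta$, which together with $(1-\lambda)\gamma<\delta$ is consistent only when $\gamma(2-\delta)<\eps$. That condition is strictly stronger than the hypothesis $\gamma<\eps$ (take $\gamma$ close to $\eps$ and $\delta$ small), so no choice of $\lambda$ on the segment $[x^*,g]$ is guaranteed to work after normalization; one must either exploit finer information about where $\|z^*_\lambda\|$ is attained or use a genuinely different construction of $y^*$ (in concrete examples the correct $y^*$ need not lie on that segment at all). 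Your closing paragraph -- take $y^*$ to be ``a norm-one support functional whose narrow slice $S(y^*,\delta)$ is inscribed in $S(x^*,\eps)$, touching that face'' and observe that it still contains $x$ -- simply asserts the existence of the object the lemma is about; it is circular rather than a remedy. As it stands, you have proved the weaker Kadets--Shvydkoy--Sirotkin--Werner form of the statement (existence of a sub-slice of some small width), not the lemma with prescribed width $\delta$.
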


In the proof of Proposition \ref{prop:ld2p+-char} below we will need
the following weak$^*$-version of Lemma \ref{lem:subslice}. Its proof
is more or less verbatim to that of Lemma \ref{lem:subslice} and will
therefore be omitted.

\begin{lem}\label{lem:w*subslice}
  Let $\eps>0$ and $x \in S_X$. Then for every $x^* \in S(x,\eps) \cap
  S_{X^*}$ which attains its norm and every positive $\delta < \eps$
  there exist $y \in S_X$ such that  $x^* \in S(y,\delta)$ and
  $S(y,\delta) \subset S(x,\eps).$
\end{lem}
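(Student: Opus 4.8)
The plan is to mimic the proof of Lemma \ref{lem:subslice}, only carried out in the predual. There the distinguished point $x$ is on the sphere of the predual of a dual space, and a Hahn--Banach functional norming it is used; here the roles of $X$ and $X^*$ are swapped, so the point playing that part is an element $z\in S_X$ at which $x^*$ \emph{attains} its norm, and this is precisely the one place where the extra hypothesis is needed. As a first, trivial, reduction: if $x^*(x)>1-\delta$ we may simply take $y=x$, since then $x^*\in S(x,\delta)$ and $S(x,\delta)\subseteq S(x,\eps)$ because $\delta<\eps$. So from now on $1-\eps<x^*(x)=:\alpha\le 1-\delta$, and we fix $z\in S_X$ with $x^*(z)=1$.

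For $t\in[0,1]$ put $c(t):=\|(1-t)x+tz\|$ and $y_t:=\big((1-t)x+tz\big)/c(t)\in S_X$; the map $t\mapsto c(t)$ is continuous with $c(0)=c(1)=1$, and with $\gamma_t:=\alpha+t(1-\alpha)=x^*\big((1-t)x+tz\big)$ one has $x^*(y_t)=\gamma_t/c(t)$. I would then introduce $F(t):=\gamma_t-(1-\delta)c(t)$ and $G(t):=(1-\delta)c(t)-(1-\eps+\eps t)$, and check the two defining facts: $F(t)>0$ says exactly $x^*\in S(y_t,\delta)$, and $G(t)\ge 0$, combined with $z^*(z)\le 1$ for $z^*\in B_{X^*}$, forces $z^*(x)>1-\eps$ whenever $z^*(y_t)>1-\delta$, i.e.\ $G(t)\ge 0$ implies $S(y_t,\delta)\subseteq S(x,\eps)$. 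The key identity is that $F(t)+G(t)=\gamma_t-(1-\eps+\eps t)=\big(\alpha-(1-\eps)\big)(1-t)$, which is strictly positive for $t<1$. An intermediate-value argument now finishes: $F(1)=\delta>0$ while $F(0)=\alpha-(1-\delta)\le 0$, so $t_0:=\inf\{t\in[0,1]:F(t)>0\}$ satisfies $t_0<1$ and, by continuity, $F(t_0)=0$; hence $G(t_0)=F(t_0)+G(t_0)=\big(\alpha-(1-\eps)\big)(1-t_0)>0$, and picking $t_1$ slightly larger than $t_0$ with $F(t_1)>0$ — close enough to $t_0$ that $G(t_1)>0$ persists by continuity — gives $y:=y_{t_1}$ with $x^*\in S(y,\delta)\subseteq S(x,\eps)$.

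The only part that takes care is verifying the two "defining facts" about $F$ and $G$: one must keep the normalizing factor $c(t)=\|(1-t)x+tz\|$ intact throughout, since it is in general strictly smaller than $1$ and it enters the estimates for $F$ and for $G$ in opposite senses — this is exactly the bit of bookkeeping already done in the proof of Lemma \ref{lem:subslice}. I expect this (rather than anything conceptual) to be the main obstacle, but it transfers to the present setting with only the obvious substitutions, which is why the detailed computation can be omitted.
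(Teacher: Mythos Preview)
Your proposal is correct and follows exactly the route the paper indicates: the paper omits the proof and simply states that it is ``more or less verbatim'' to that of Lemma~\ref{lem:subslice}, and your argument is precisely that transcription, with the norming functional replaced by a point $z\in S_X$ at which $x^*$ attains its norm. The convex-combination/intermediate-value scheme you set up (via $F$ and $G$ and the identity $F(t)+G(t)=(\alpha-(1-\eps))(1-t)$) is the standard way this lemma is proved, so there is nothing to add.
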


We will now add to the list of statements in Theorem \ref{thm:ikw}
statements similar to b) and d) in Theorem \ref{thm:daugavet-char}. 

\begin{thm}\label{prop:ld2p+-char}
  Let $X$ be a Banach space. Then the following statements are equivalent:
  \begin{enumerate}
    \item [a)]$X$ has the LD2P+.
    \item [b)]For every $x \in S_X$, every $\eps > 0$, every
      $\delta > 0$, and every $x^* \in S(x,\eps) \cap S_{X^*}$ there
      exists $y^* \in S(x,\eps)$ with $\|x^* -y^*\| > 2-\delta$.
    \item [c)]The equation $\|I - P\|= 1 + \|P\|$ holds for every weakly
      compact projection $P$ on $X$.
  \end{enumerate}
\end{thm}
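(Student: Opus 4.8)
The plan is to prove the chain of implications (a) $\Rightarrow$ (b) $\Rightarrow$ (a) and (a) $\Leftrightarrow$ (c), using the previously-established equivalences in Theorem~\ref{thm:ikw} together with the two ``subslice'' lemmas (Lemma~\ref{lem:subslice} and Lemma~\ref{lem:w*subslice}). The underlying philosophy is that the LD2P+ is the ``local'' analogue of the Daugavet property, so the proof should mirror the proof of Theorem~\ref{thm:daugavet-char}, but with the crucial difference that one must stay inside a \emph{fixed} slice rather than being allowed to roam over all of $S_{X^*}$; this is exactly where Lemma~\ref{lem:subslice} is used to replace a slice by a smaller one containing a prescribed point.

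\medskip

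For (a) $\Rightarrow$ (b): fix $x \in S_X$, $\eps, \delta > 0$, and $x^* \in S(x,\eps) \cap S_{X^*}$. By Lemma~\ref{lem:subslice} applied in $X$ with roles of point and functional as usual, one may shrink to find a functional so that $x^*$ itself lies deep inside; then one wants to produce $y^*$ in $S(x,\eps)$ far from $x^*$. The natural route is to pass to $X^*$ and apply the LD2P+ of... but $X^*$ need not have the LD2P+. Instead I would argue directly: by Theorem~\ref{thm:ikw}(c), $x \in \ncconv(\Delta_\eps(x))$ in $B_X$; dualizing, for the weak$^*$-slice $S(x,\eps)$ of $B_{X^*}$ one shows it has ``diameter 2 witnessed at $x^*$'' — concretely, take a norm-1 functional $z^*$ with $z^*(x)$ close to $1$ and $\|x^* - z^*\|$ close to $2$ by a Hahn--Banach/ separation argument against the convex hull $\ncconv(\Delta_\eps(x))$, then use Lemma~\ref{lem:w*subslice} to ensure $z^*$ can be taken norm-attaining and inside $S(x,\eps)$. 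The symmetric implication (b) $\Rightarrow$ (a) runs the same way with the arrows reversed, invoking Lemma~\ref{lem:w*subslice} and (the appropriate direction of) the geometric characterization.

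\medskip

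For the equivalence with (c): the standard operator-theoretic bridge is that $\|I - P\| = 1 + \|P\|$ for a projection $P$ is equivalent, after normalizing, to a statement about the range and kernel of $P$ — writing $y = Px/\|Px\|$ for a suitable $x$, one needs elements of $S_X$ lying in a slice determined by $P^*$ that are almost at distance $2$ from a fixed such $y$. For rank-one $P$ this is precisely Theorem~\ref{thm:ikw}(a). For general weakly compact $P$, I would use the weak-compactness to reduce to the situation where the relevant slice of $B_X$ meets the range of $P$ near a norm-attaining point (so that the extremizing sequence for $\|I-P\|$ has a weak cluster point, pinning down where to test), and then feed this into statement (a) or (b). The direction (c) $\Rightarrow$ (a) is easy since rank-one norm-one projections are weakly compact, so (c) immediately gives Theorem~\ref{thm:ikw}(a).

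\medskip

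\textbf{The main obstacle} I anticipate is the implication (a) $\Rightarrow$ (c) for \emph{weakly compact} (not merely rank-one) projections: one must convert the purely local/geometric hypothesis into a norm estimate on $I - P$, and the only leverage for upgrading from rank one to weakly compact is that a weakly compact operator maps the unit ball into a weakly compact — hence, by Krein--Šmulian and the relevant slice machinery, ``small and well-behaved'' — set. The delicate point is ensuring that the sequence $(x_n) \subset S_X$ with $\|(I-P)x_n\| \to \|I - P\|$ can be chosen so that $Px_n$ converges weakly to some $x_0$ with $\|x_0\|$ controlled, and that $x_0/\|x_0\|$ (or $x_0$ itself) lies in a slice on which one may apply the LD2P+; handling the case $\|x_0\| = 0$ separately (where $\|I - P\| = 1$ is forced, or $\|P\|$ must be estimated by a different limit) is the fussy part. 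I expect this to parallel closely the known proof that (b) $\Leftrightarrow$ (a) in Theorem~\ref{thm:daugavet-char}, so the argument of Kadets--Shvidkoy--Sirotkin--Werner adapts, but the bookkeeping to keep everything inside the fixed slice is where the work lies.
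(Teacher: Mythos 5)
Your overall skeleton (mirror the Daugavet-property proof, use the subslice lemmas to stay inside a fixed slice, and get (c)$\Rightarrow$(a) from rank-one projections via Theorem \ref{thm:ikw}) matches the paper's, and your treatment of (a)$\Leftrightarrow$(c) is at the same level of detail as the paper itself, which simply defers to the argument of Kadets--Shvidkoy--Sirotkin--Werner. But the central implication (a)$\Rightarrow$(b) is not actually proved, and the route you sketch does not work as stated. If you apply Theorem \ref{thm:ikw}(c) at the level $\eps$ and extract, for each $y_i\in\Delta_\eps(x)$ appearing in the approximating convex combination, a functional $y_i^*$ nearly norming $x-y_i$, you get $y_i^*(x)>1-\eps$ (so $y_i^*\in S(x,\eps)$) and $y_i^*(y_i)<-(1-\eps)$; but the best lower bound you can then extract is $\|x^*-y_i^*\|\ge x^*(y_i)+1-\eps$, and since $x^*$ is only assumed to satisfy $x^*(x)>1-\eps$, you cannot force $x^*(y_i)$ above roughly $1-\eps$ for any term of the combination. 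This yields $\|x^*-y^*\|>2-2\eps$ at best, not $>2-\delta$ for $\delta$ arbitrarily small independently of $\eps$. The ``separation against $\ncconv(\Delta_\eps(x))$'' you invoke is never specified (what is being separated from what?) and does not supply the missing quantitative control.

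The missing idea is the correct deployment of the subslice lemmas, whose roles you have partly reversed. First use Bishop--Phelps (not Lemma \ref{lem:w*subslice}) to assume $x^*$ attains its norm; Lemma \ref{lem:w*subslice} requires norm-attainment as a \emph{hypothesis}, it does not produce it. Then, for $0<\eta<\min\{\eps,\delta/2\}$, Lemma \ref{lem:w*subslice} gives $y\in S_X$ with $x^*\in S(y,\eta)$ and $S(y,\eta)\subset S(x,\eps)$. Now $y\in S(x^*,\eta)\cap S_X$, so the LD2P+ applied in the slice $S(x^*,\eta)$ produces $z\in S(x^*,\eta)$ with $\|y-z\|>2-\eta$; a functional $y^*$ nearly norming $y-z$ then satisfies $y^*(y)>1-\eta$, hence $y^*\in S(y,\eta)\subset S(x,\eps)$, while $y^*(z)<-(1-\eta)$ and $x^*(z)>1-\eta$ give $\|x^*-y^*\|\ge x^*(z)-y^*(z)>2-2\eta>2-\delta$. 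The point is that the localization happens on the dual side \emph{before} the LD2P+ is invoked; the primal characterization in Theorem \ref{thm:ikw}(c) is not needed here at all. (For (b)$\Rightarrow$(a) it is Lemma \ref{lem:subslice}, not Lemma \ref{lem:w*subslice}, that plays the symmetric role.)
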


\begin{proof}
  a) $\Rightarrow$ b). By the Bishop-Phelps
  theorem we can assume without loss of generality that $x^* \in
  S(x,\eps) \cap S_{X^*}$ attains its norm. Let $0 < \eta <
  \min\{\eps,\delta/2\}$ and find by Lemma  \ref{lem:w*subslice} $y \in
  S_X$ such that $x^* \in S(y,\eta)$ and $S(y,\eta) \subset
  S(x,\eps)$. Note that $y \in S(x^*,\eta)$ and thus,  since $X$ has
  the LD2P+, we can find $z \in S(x^*,\eta)$ such that  $\|y - z\| >
  2-\eta.$ Hence there is $y^* \in S_{X^*}$ such that \[y(y^*)-z(y^*)
  = (y-z)(y^*) > 2 - \eta.\] From this we have $y(y^*) > 1-\eta$ and
  $z(y^*) > 1 - \eta$. It follows that $y^* \in S(x,\eps)$ as $S(y,\eta) \subset
 S(x,\eps)$. Moreover, using that $z \in S(x^*,\eta)$ and b), we have
 \begin{align*}
   \|x^* - y^*\| &\ge (x^*-y^*)(z) \\ 
                        &= x^*(z) - y^*(z) \\ 
                        & > 1 -  \eta + 1 - \eta > 2 - \delta.
  \end{align*}
 
   b) $\Rightarrow$ a). The proof is identical to the proof of the converse except
  that one does not have to use the Bishop-Phelp's theorem and that
  one uses \cite[Lemma~2.1]{IK} in place of Lemma
  \ref{lem:w*subslice}.
 
   a) $\Rightarrow$ c). The proof is similar to that of 
   \cite[Theorem~2.3]{MR1621757}.

  c) $\Rightarrow$ a). This is clear as c) trivially implies a) in Theorem \ref{thm:ikw}. 
\end{proof}

Note that $c_0$ does not have the LD2P+ as $e_1 \in S(e_1,\eps) \cap
S_{c_0}$ for every $1 \ge \eps >0$, but every point in $S(e_1,\eps)$
is of distance 1 or less from $e_1$. $c_0$ is the prototype of an
M-embedded space. Since the dual is an
M-embedded space has the RNP (see e.g. \cite[III.3
Corollary~3.2]{HWW}) we actually get from Proposition
\ref{prop:ld2p+-char} that every M-embedded space fails the LD2P+.

\begin{cor}\label{cor:m-emb-ld2p+}
  M-embedded spaces fail the LD2P+. 
\end{cor}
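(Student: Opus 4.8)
The plan is to deduce Corollary~\ref{cor:m-emb-ld2p+} from Theorem~\ref{prop:ld2p+-char}(c) together with the structural facts about M-embedded spaces recalled in the paragraph preceding the statement. The key observation is that if $X$ is M-embedded, then $X^*$ has the Radon--Nikod\'ym property (since $X^*$ is then L-embedded, being a dual M-embedded space, and more directly because $X^*$ is known to have the RNP by \cite[III.3~Corollary~3.2]{HWW}). Spaces whose dual has the RNP are precisely the spaces in which every slice of $B_X$ contains a point of small slice-diameter, so $B_X$ has plenty of denting points, and in particular $X$ admits norm-$1$ rank-$1$ projections $P$ with $\|I-P\| < 2$. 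That last statement contradicts condition (a) of Theorem~\ref{thm:ikw}, which is equivalent to the LD2P+ via Lemma~\ref{lem:subslice} and Theorem~\ref{prop:ld2p+-char}.

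Concretely, I would argue as follows. Suppose, for contradiction, that an M-embedded space $X$ has the LD2P+. Since $X^*$ has the RNP, $B_X$ has a denting point; equivalently (dually) $X^*$ has a weak$^*$-denting point, or one may argue directly that there is $x\in S_X$ and $x^*\in S_{X^*}$ with $x^*(x)=1$ and with slices $S(x^*,\eps)$ of arbitrarily small diameter. Define the rank-$1$ projection $P = x^*\otimes x$, i.e. $Py = x^*(y)x$; then $P$ is a norm-$1$ projection (it is idempotent since $x^*(x)=1$, and $\|P\|=1$). By Theorem~\ref{thm:ikw}, the LD2P+ forces $\|I-P\|=2$, so there is $y\in B_X$ with $\|(I-P)y\| = \|y - x^*(y)x\|$ close to $2$; this pushes both $\|y\|$ and $|x^*(y)|$ close to $1$, and after adjusting a sign we land in a slice $S(x^*,\eps)$ in which two points ($x$ and essentially $y$) are at distance close to $2$, so $\diam S(x^*,\eps)$ is close to $2$ for all $\eps>0$, contradicting the smallness of those slice diameters. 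Hence $X$ fails the LD2P+.

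The main obstacle, and the step requiring the most care, is the passage from ``$X^*$ has the RNP'' to ``$X$ has a slice of $B_X$ of arbitrarily small diameter attained at a norm-$1$ functional that supports a norm-$1$ rank-$1$ projection.'' The cleanest route is to invoke the standard fact that $X^*$ has the RNP if and only if every separable subspace of $X$ has separable dual, hence $B_X$ is the closed convex hull of its denting points (or at least has denting points); at a denting point $x$ one has slices $S(x^*,\eps)\ni x$ of diameter $\to 0$, and one may normalize $x^*$ so that $x^*(x) = \|x^*\| = 1$ by a Bishop--Phelps/James-type adjustment, giving the desired rank-$1$ projection $x^*\otimes x$. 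Alternatively, and perhaps more in the spirit of the excerpt, one simply cites that M-embedded spaces have the RNP-dual property and notes that the LD2P+ characterization in Theorem~\ref{thm:ikw}(c), namely $x\in\ncconv(\Delta_\eps(x))$ for every $x\in S_X$, is flatly incompatible with $x$ being (or being approximable by) a denting point of $B_X$, since a denting point is not in the closed convex hull of the far-away set $\Delta_\eps(x)$ for small $\eps$. Either way the proof is short; the only thing to get right is which classical RNP-duality statement to quote and to record that $c_0$ is the motivating instance, exactly as the surrounding text already indicates.
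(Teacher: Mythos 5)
There is a genuine gap at the central step. You deduce from ``$X^*$ has the RNP'' that $B_X$ has denting points (or at least slices of arbitrarily small diameter), and both of your suggested routes rest on that deduction. The implication is false, and the counterexample is exactly the motivating space: $c_0$ is M-embedded and $c_0^*=\ell_1$ has the RNP, yet $B_{c_0}$ has no extreme points at all, hence no denting points, and every slice of $B_{c_0}$ has diameter $2$ (M-embedded spaces even have the SD2P, as the paper recalls). It is the RNP of $X$ \emph{itself} that yields dentability of $B_X$; the RNP of $X^*$ is the Asplund property of $X$ and controls the \emph{dual} ball: it says that every bounded subset of $X^*$ admits weak$^*$ slices of arbitrarily small diameter. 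Your parenthetical ``every separable subspace of $X$ has separable dual, hence $B_X$ is the closed convex hull of its denting points'' has the same flaw --- the ``hence'' is unjustified and fails for $c_0$. Consequently the rank-one projection $x^*\otimes x$ you want to build at a denting point of $B_X$ need not exist.

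The argument is easily repaired, and the repair is essentially what the paper's one-line justification intends: since $X$ is Asplund, $B_{X^*}$ has a weak$^*$ slice $S(x,\eps)$ of diameter strictly less than $2$; such a slice always meets $S_{X^*}$ (take a norming functional for $x$), so condition (b) of Theorem \ref{prop:ld2p+-char} fails --- one cannot produce $y^*\in S(x,\eps)$ with $\|x^*-y^*\|>2-\delta$ once $\delta<2-\diam S(x,\eps)$ --- and therefore $X$ fails the LD2P+. In other words, your observation that a denting point is incompatible with lying in the closed convex hull of the far-away set should be run in $X^*$ with weak$^*$ slices and weak$^*$-denting points, via part (b) of Theorem \ref{prop:ld2p+-char}, not in $X$ itself. (A smaller issue: even where denting points of $B_X$ do exist, Bishop--Phelps only lets you arrange $x^*(x)=\|x^*\|=1$ approximately, so the projection construction would need an extra perturbation; but this is moot once the main gap is addressed.)
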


It is known that all the diameter 2 properties in Definition \ref{defn:diam2p} as
well as the Daugavet
property are inherited by certain subspaces called ai-ideals (see
\cite{ALN2} and \cite{A3}). We will end this section by showing that
this is true for the LD2P+ as well.

A subspace $X$ of a Banach space $Y$ is called
an \emph{ideal} in $Y$ if there exists a norm 1 projection $P$ on $Y^*$  with
$\ker P = X^\perp $. $X$ being an ideal in $Y$ is in turn equivalent
to the $X$ being locally 1-complemented in $Y$, i.e. for every $\eps>0$ and
every finite-dimensional subspace $E\subset Y$ there exists $T:E\to X$
such that
\begin{itemize}
  \item [a)] $Te=e$ for all $e\in X\cap E$.
  \item [b)] $\|Te\|\leq (1+\eps)\|e\|$ for all $e\in E$.
\end{itemize}

Following \cite{ALN2} a subspace $X$ of a Banach space $Y$ is called
an \emph{almost isometric ideal (ai-ideal)} in $Y$ if $X$ is locally 1-complemented
with almost isometric local projections, i.e., for every $\eps>0$ and
every finite-dimensional subspace $E\subset Y$ there exists $T:E\to X$
which satisfies a) and 
\begin{itemize}
  \item [b')] $(1-\eps)\|e\|\leq\|Te\|\leq (1+\eps)\|e\|$ for all $e\in E$.
\end{itemize}

Note that an ideal $X$ in $Y$ is an ai-ideal if $P(Y^*)$ is a 1-norming subspace of
$Y^\ast$. Ideals $X$ in $Y$ for which $P(Y^*)$ is a 1-norming
subspace for $X$ are called \emph{strict ideals}. An ai-ideal is, however,
not necessarily strict (see \cite{ALN2}).

\begin{prop} \label{prop:ld2p+-ai-ideal}
  Let $Y$ have the LD2P+ and assume $X$ is an ai-ideal in $Y$. Then $X$
  has the LD2P+.
\end{prop}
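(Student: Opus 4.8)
The plan is to verify the LD2P+ for $X$ directly from the definition, transporting data from $X$ back to $Y$ via the almost isometric local projections. Fix $x^{*} \in S_{X^{*}}$, $\eps > 0$, $\delta > 0$, and $x \in S(x^{*},\eps) \cap S_{X}$; we must produce $y \in S(x^{*},\eps)$ with $\|x - y\| > 2 - \delta$. The first step is to extend $x^{*}$ to a functional on $Y$ of norm $1$ (Hahn--Banach), which we still call $x^{*}$; since $x \in S_{X} \subset S_{Y}$ and $x^{*}(x) > 1 - \eps$, we have $x \in S_{Y}(x^{*},\eps) \cap S_{Y}$. As $Y$ has the LD2P+, pick $0 < \eta < \delta$ small (to be fixed) and obtain $z \in S_{Y}(x^{*},\eps')$ with $\|x - z\| > 2 - \eta$, where $\eps'<\eps$ is a slightly shrunk radius left with room to absorb the distortion of the local projection; shrinking the slice radius here is exactly what will let the pushed-down element still land in $S(x^{*},\eps)$.

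The second step is to push $z$ down into $X$. Apply the ai-ideal property to the finite-dimensional subspace $E = \spann\{x, z\} \subset Y$ with a small parameter $\gamma > 0$: there is $T \colon E \to X$ with $Te = e$ for $e \in X \cap E$ (so $Tx = x$) and $(1-\gamma)\|e\| \le \|Te\| \le (1+\gamma)\|e\|$ on $E$. Set $y_{0} = Tz \in X$ and $y = y_{0}/\|y_{0}\|$ (if $y_{0}=0$ the estimates below are vacuously violated for small $\gamma$, so $y_0\neq 0$; note $\|y_0\|\le 1+\gamma$, close to $1$). Then $\|x - y\| \ge \|x - y_{0}\| - \|y_{0} - y\| \ge \|Tx - Tz\| - \gamma \ge (1-\gamma)\|x - z\| - \gamma > 2 - \eta - O(\gamma)$, which exceeds $2 - \delta$ once $\eta$ and $\gamma$ are chosen small enough. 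For membership in the slice, the natural move is to use the norming projection $P$ on $Y^{*}$ with $\ker P = X^{\perp}$: the functional $x^{*}$ restricted to $X$ has an extension $\widetilde{x^{*}} := P(x^{*}) \in Y^{*}$ (abusing notation, an extension of $x^{*}|_X$ of norm $\le 1$) that annihilates nothing relevant here; more simply, since $Tz \in X$ and $x^{*}$ was our fixed functional in $S_{X^{*}}$, we estimate $x^{*}(y_{0}) = x^{*}(Tz)$. The point is that $Tz$ is close to $z$ in the sense that any functional of norm $\le 1$ on the two-dimensional space $E$ is nearly preserved: writing $z = \alpha x + \beta(z - \text{proj})$ is awkward, so instead bound $|x^{*}(Tz) - x^{*}(z)|$ by noting $Tz - z$ need not be small — and this is the crux.

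The main obstacle is precisely this last point: $T$ is an \emph{almost isometry} on $E$, not an almost identity, so $Tz$ can be far from $z$ in norm and there is no a priori control on $x^{*}(Tz)$ in terms of $x^{*}(z)$. The standard remedy, which I would implement, is to enlarge $E$ to include a norming direction: by the ideal structure the relevant extension $\widetilde{x^{*}} = P(x^{*})$ satisfies $\widetilde{x^{*}}(Te) = x^{*}(e)$ for $e \in E$ whenever $T$ is a local projection associated to $P$ — more carefully, one uses the characterization of ai-ideals via nets of such local projections $T_{\alpha}$ for which $\widetilde{x^{*}} \circ T_{\alpha} \to x^{*}$ pointwise on $Y$ (this is how the ALN2 proofs for the other diameter-2 properties proceed). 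Thus, choosing $T$ so that additionally $|\widetilde{x^{*}}(Tz) - x^{*}(z)| < \gamma$ (possible since $T$ ranges over such a net), and using that $\|\widetilde{x^{*}}\| \le 1$ while $\widetilde{x^{*}}|_{X} = x^{*}$, we get $x^{*}(y_{0}) = \widetilde{x^{*}}(Tz) > x^{*}(z) - \gamma > 1 - \eps' - \gamma$, hence $x^{*}(y) = x^{*}(y_{0})/\|y_{0}\| > (1 - \eps' - \gamma)/(1+\gamma) > 1 - \eps$ for $\eps'$ and $\gamma$ chosen appropriately at the start. This places $y$ in $S(x^{*},\eps)$ and completes the proof. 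I would present the choice of constants in the order $\eps' < \eps$ first, then $\eta < \delta$, then $\gamma$ last, so that each inequality closes cleanly.
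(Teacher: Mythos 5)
Your route is genuinely different from the paper's, and the difference matters. The paper does not argue with slices at all: it invokes the characterization in Theorem \ref{thm:ikw}~c) (equivalent to the LD2P+ via Lemma \ref{lem:subslice}), namely that $x\in\ncconv(\Delta_\eps(x))$ for every $x\in S_X$ and $\eps>0$. To verify this for $X$, one takes a convex combination $\sum\lambda_n y_n$ of points of $\Delta_\beta^Y(x)$ close to $x$, sets $E=\spann\{y_1,\dots,y_N,x\}$, and pushes down with a single $(1+\eta)$-isometric local projection $T$, normalizing each $Ty_n$. Since this characterization involves only norms and convex combinations, the bare definition of ai-ideal (condition b')) suffices; no functional ever has to be transported. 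Your proof works directly with the slice definition, and you correctly identify the crux: an almost isometric $T$ gives no control on $x^*(Tz)$. The remedy you propose --- local projections compatible with the Hahn--Banach extension operator $\phi(x^*)=P(g)$, so that $x^*(Te)\approx \phi(x^*)(e)$ on $E$ --- is the right one and does exist in the literature on (ai-)ideals, but it is a strictly stronger statement than the definition of ai-ideal used in this paper, so as written your proof rests on an external result that you would need to quote precisely or prove. The paper's choice of characterization is exactly what makes this machinery unnecessary.

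Two further points need attention if you pursue your route. First, the slice in $Y$ must be defined by the canonical extension $\phi(x^*)=P(g)$ from the start (not an arbitrary Hahn--Banach extension $g$), since the compatibility relation only controls $\phi(x^*)(Tz)$ against $\phi(x^*)(z)$; with an arbitrary extension you know $g(z)$ is large but not $\phi(x^*)(z)$. Second, your shrunk radius $\eps'<\eps$ cannot be chosen ``first'': to apply the LD2P+ of $Y$ you need $x$ to lie in the $\eps'$-slice, which forces $1-x^*(x)<\eps'<\eps$, so $\eps'$ must be chosen after $x$ is fixed. (Also, normalizing by $\|Tz\|$ only costs $O(\gamma+\eta)$ rather than $O(\gamma)$, since $\|z\|>1-\eta$ but need not equal $1$; this is harmless but should be stated.) With these repairs your argument closes; the paper's argument avoids all three issues at once.
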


\begin{proof} For $\delta>0$, $Z$ a subspace of $Y$, and $x \in S_Z$ put 
  \[\Delta_\delta^Z(x) = \{y \in B_Z: \|x-y\| > 2- \delta\}.\]
  
  Let $x\in S_X$, $\eps >0$, and $\alpha >0$. We will show that there
  exists $z \in \mbox{conv}\Delta_\eps^X(x)$ with $\|x-z\| <
  \alpha$. First, since $Y$
  enjoys the LD2P+, we know that for any positive $\beta < \eps$ and
  any positive $\gamma < \alpha$ we can find
  $y=\sum_{n=1}^N\lambda_ny_n \in \mbox{conv}\Delta_{\beta}^Y(x)$ with
  $(y_n)_{n=1}^N \subset \Delta_{\beta}^Y(x)$ 
  such that $\|x-y\| < \gamma$. Now
  let $E=\mbox{span}\{y_1, \ldots, y_N, x\}$ and pick a
  local projection $T:E\to X$ such that $T$ is a
  $(1+\eta)$-isometry with $\eta > 0$ so small that 
  $(1+\eta)\gamma + \eta <  \alpha$, and $(1-\eta)(2-\beta) - \eta > 2 -
  \eps$. Put $z_n=\frac{Ty_n}{\|Ty_n\|}$ and
  $z=\sum_{n=1}^N\lambda_nz_n$. As $Tx=x$ we get 
  \begin{align*}
    \|x-z\| &\le \|x-Ty\|+ \|Ty - z\|\\ 
                & \le \|T(x -y)\| + \sum_{n=1}^N\lambda_n\big|1 - \|Ty_n\|\big| \\
                &< (1+ \eta)\gamma + \max_{1 \le n \le N}{\big|1 -
                  \|Ty_n\|\big|}\\
                & \le (1+ \eta)\gamma  + \eta < \alpha.  
  \end{align*}
  
  Moreover for every $1 \le n \le N$ we have 
  \begin{align*}
    \|x - z_n\| & = \|T(x - \frac{y_n}{\|Ty_n\|})\| \\ & \ge (1-\eta)\|x -
                        \frac{y_n}{\|Ty_n\|}\|\\
                      & \ge (1-\eta)(\|x - y_n\| - \|y_n -
                        \frac{y_n}{\|Ty_n\|}\|)\\
                       & \ge (1-\eta)(2-\beta -
                         \frac{\big|1-\|Ty_n\|\big|}{\|Ty_n\|}\|y_n\|)\\
                       & \ge (1-\eta)(2-\beta -
                         \frac{\eta}{1-\eta}) > 2- \eps,
  \end{align*}
  Thus $(z_n)_{n=1}^N \subset \Delta_\eps(x)$ and as
  $\alpha > 0$ is arbitrarily chosen, we are done.
\end{proof}

\section{The difficulty of finding an MLUR norm on $c_0$ with the D2P}
This section is motivated by the question whether it is possible to
construct an MLUR norm on $c_0$ with the D2P. Actually this turns out to be
much harder than in $C[0,1]$. From Proposition \ref{fact:1} below we see
that if such a norm exists, it cannot be a lattice norm. 

\begin{defn}
  Let $X$ be a Banach lattice. A projection $P:X \to X$ is said to be
  a \emph{lattice projection}
  if $u$ and $v$ are disjoint whenever $u \in P(X)$ and 
  $v \in \ker P$, the kernel of $P$. 

  We say that \emph{the identity can be approximated by finite rank lattice
  projections at a point $x \in X$} if for all $\eps > 0$ there exists a
  lattice projection $P$ with finite rank such that $\|x - Px\| < \eps.$ 
\end{defn}

\begin{prop}\label{fact:1}
  Let $X$ be a Banach lattice and $x \in B_X$ a strongly extreme
  point. If the identity can be approximated by finite rank lattice
  projections at $x$, then $x$ is a denting point.     
\end{prop}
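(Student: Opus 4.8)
The plan is to show that if $x$ is not denting, then $x$ cannot be strongly extreme, contradicting the hypothesis. Recall that $x \in B_X$ being a denting point means that for every $\eps > 0$ there is a slice of $B_X$ containing $x$ with diameter less than $\eps$; failing this, there is a fixed $\eps_0 > 0$ such that every slice of $B_X$ that contains $x$ has diameter at least $\eps_0$. I would first fix such an $\eps_0$ and also fix the approximation parameter: given any $\eta > 0$, choose a finite-rank lattice projection $P$ with $\|x - Px\| < \eta$. Write $X = P(X) \oplus \ker P$ as an order-direct-sum decomposition, so that $Px$ and $(I-P)x$ are disjoint, and more generally the band generated by $P(X)$ and its complementary band split every element. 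The key structural fact I would extract is that on the finite-dimensional sublattice $P(X)$ the norm is "locally like a finite-dimensional lattice norm", in particular $P(X)$ has plenty of extreme/denting structure, while the error $\ker P$ is small in the sense $\|(I-P)x\| < \eta$.

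The heart of the argument is the following dichotomy, exploiting disjointness. Since every slice of $B_X$ through $x$ has diameter $\ge \eps_0$, by a standard argument (as in the proof of Proposition~\ref{prop:prop0} via Choquet's lemma, run in reverse) $x$ fails to be an extreme point of $B_{X^{**}}$, or at least one can produce, for each $n$, elements $u_n, v_n \in B_X$ with $\|u_n - v_n\| \ge \eps_0$ and $\tfrac12(u_n + v_n)$ arbitrarily close to $x$; equivalently there are $y_n \in X$ with $\|y_n\| \ge \eps_0/2$ not going to $0$ in norm and $\|x \pm y_n\| \le 1 + o(1)$. If one could upgrade the latter to $\|x \pm y_n\| \to 1$, strong extremality of $x$ is immediately contradicted, and we are done. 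So the real work is to produce such a "bad sequence" $(y_n)$ from non-dentability, and this is exactly where the lattice projection hypothesis enters: I would choose the perturbations $y_n$ to live essentially inside the finite-dimensional band $P(X)$ (for a projection $P$ with smaller and smaller error $\eta_n \to 0$), because in a finite-dimensional lattice one can perturb $Px$ in a disjointly-supported direction $y_n$ with $\|Px \pm y_n\|$ controlled, and then glue back the small piece $(I-P)x$ using the lattice (disjoint) decomposition so that $\|x \pm y_n\| \le \max\{\|Px \pm y_n\|, \|(I-P)x\|\} + (\text{correction})$ stays near $1$. The disjointness is precisely what lets the norm of the sum be controlled by the max/$\ell_p$-type combination rather than blowing up.

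The main obstacle I anticipate is the following tension: non-dentability gives bad behaviour of $x$ as a global point of $B_X$, but the lattice projection only controls things on the finite-dimensional piece $P(X)$, where a finite-dimensional lattice need not be badly behaved at all — its unit ball could even be strictly convex in the relevant directions. The resolution must be that non-dentability of $x$ forces the bad direction $y_n$ to have a substantial component transverse to every fixed finite-dimensional band, and one must then argue that pushing this component through the order decomposition still keeps $\|x \pm y_n\|$ near $1$ because the complementary piece is disjoint from where $Px$ lives. Concretely I would: (1) fix $\eps_0$ from non-dentability; (2) for each $n$ pick a lattice projection $P_n$ with $\|x - P_n x\| < 1/n$; (3) using that the slice-diameter lower bound $\eps_0$ survives when we intersect with $B_{P_n(X)}$ up to the small error, extract $y_n$ supported (up to $1/n$) in $\ker P_n$ with $\|y_n\|$ bounded below and $\|P_n x \pm P_n y_n\| \le 1 + 1/n$; (4) use disjointness of $P_n x$ (living near the $P_n(X)$-band) and $y_n$ to conclude $\|x \pm y_n\| \to 1$; (5) invoke that $x$ is strongly extreme to force $\|y_n\| \to 0$, the desired contradiction. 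The bookkeeping in step~(3)–(4), making the two "small error" estimates and the disjoint-gluing estimate compatible, is the delicate part and is where I would spend the most care.
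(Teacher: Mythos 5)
Your plan runs by contradiction (non-dentable $\Rightarrow$ not strongly extreme), which is a genuinely different architecture from the paper's direct argument, but it has a gap at its central step. In step (4) you claim that disjointness of $P_nx$ and $y_n$ lets you conclude $\|x\pm y_n\|\to 1$. Disjointness of $a\perp u$ in a Banach lattice yields only the identity $\|a+u\|=\|a-u\|$ (because $|a+u|=|a|+|u|=|a-u|$); it gives no upper bound of the type $\|a+u\|\le\max\{\|a\|,\|u\|\}+o(1)$ --- in $L_1$ one has $\|a+u\|=\|a\|+\|u\|$. So from ``$y_n$ essentially in $\ker P_n$ with $\|y_n\|\ge\eps_0/2$'' you cannot derive even one of the two bounds $\|x\pm y_n\|\le 1+o(1)$, let alone both. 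The missing idea is that the one-sided bound must come from unit-ball membership, and disjointness then supplies the other sign for free: if $w\in B_X$ with $\|P(x-w)\|<\delta$ and $u=w-Pw\in\ker P$, then $\|Px+u\|\le\|P(x-w)\|+\|w\|\le 1+\delta$, hence also $\|Px-u\|\le 1+\delta$ by the disjointness identity, hence $\|x\pm u\|\le 1+3\delta$, and strong extremality forces $\|u\|$ small. Your step (3) is likewise not established: the assertion that non-dentability forces the bad direction to be (asymptotically) transverse to every finite-dimensional band is essentially equivalent to what is being proved, and you give no mechanism for it.

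For comparison, the paper argues directly and more economically: with $P$ a finite-rank lattice projection and $\delta$ chosen from the strong extremality modulus, the set $W=\{w:\|P(x-w)\|<\delta\}$ is \emph{weakly open} (finite rank) and the computation above shows $\diam(W\cap B_X)$ is small, so $x$ is a point of weak-to-norm continuity of $B_X$; since $x$ is extreme, the Lin--Lin--Troyanski theorem (extreme $+$ point of continuity $\Rightarrow$ denting) concludes. This last bridge from small weakly open neighborhoods to small slices is absent from your proposal; your gesture toward Choquet's lemma ``run in reverse'' points in that direction but is not carried out, and without some such ingredient a slice-diameter lower bound cannot be contradicted by exhibiting small weak neighborhoods alone.
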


To prove this we will use the following lemma.

\begin{lem}\label{claim:1}
  Let $\eps >0$. Then there exists $\delta >0$ such that for any
  lattice projection $P:X \to X$, the condition $\|x - Px\| < \delta$,
  $u \in \ker P$, and $\|Px + u\| \le 1 + \delta$ imply $\|u\| < \eps$.
\end{lem}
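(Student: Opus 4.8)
The plan is to argue by contradiction and extract, from a failing sequence, a configuration that violates the fact that $x$ is a strongly extreme point. So suppose the conclusion fails: there is an $\eps > 0$ for which no $\delta$ works, i.e. there are lattice projections $P_k \colon X \to X$ and vectors $u_k \in \ker P_k$ with $\|x - P_k x\| < 1/k$, $\|P_k x + u_k\| \le 1 + 1/k$, but $\|u_k\| \ge \eps$ for all $k$. Writing $v_k = P_k x$, we have $v_k \to x$ in norm (so eventually $\|v_k\|$ is close to $1$) and $\|v_k + u_k\| \le 1 + 1/k$. The key structural input is that $v_k$ and $u_k$ are disjoint, since $v_k \in P_k(X)$ and $u_k \in \ker P_k$ and $P_k$ is a lattice projection; hence $\||v_k| - |u_k|\| = \|\,|v_k| + |u_k|\, \| = \||v_k| \vee |u_k|\|$ and in particular $|v_k + u_k| = |v_k - u_k| = |v_k| \vee |u_k| = |v_k| + |u_k| \ge |v_k|, |u_k|$ pointwise in the lattice sense.

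The heart of the argument is then to promote disjointness into a near-isometric splitting. From $|v_k \pm u_k| = |v_k| + |u_k|$ and monotonicity of the norm we get $\|v_k \pm u_k\| = \|\,|v_k| + |u_k|\,\|$; combining with $\|v_k + u_k\| \le 1 + 1/k$ gives $\|v_k - u_k\| = \|v_k + u_k\| \le 1 + 1/k$ as well. So both $v_k + u_k$ and $v_k - u_k$ have norm at most $1 + 1/k$. Now replace $v_k$ by $x$: since $\|v_k - x\| < 1/k$ we get $\|x + u_k\| \le \|v_k + u_k\| + 1/k \le 1 + 2/k$ and likewise $\|x - u_k\| \le 1 + 2/k$. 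On the other hand $\|x \pm u_k\| \ge \|x\| - \|u_k\|$ is not useful directly; instead note that since $x$ is an extreme point of $B_X$ and $\tfrac12\big((x+u_k)+(x-u_k)\big) = x$ with both halves of norm $\le 1+2/k$, we at least have $\|x \pm u_k\| \to 1$. This is exactly the hypothesis needed: $x$ being a strongly extreme point forces $\|u_k\| \to 0$, contradicting $\|u_k\| \ge \eps$.

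The main obstacle I anticipate is the lower bound $\liminf_k \|x \pm u_k\| \ge 1$, which is what lets us conclude $\|x \pm u_k\| \to 1$ rather than merely $\limsup \le 1$; without it the strong-extreme-point hypothesis does not apply. The clean way around this is again the lattice order: from $|x \pm u_k| \ge |v_k| - |x - v_k|\cdot\mathbf{1}$-type estimates, or more simply from $|x + u_k| = |v_k + u_k| + (|x+u_k| - |v_k+u_k|)$ together with $\big| |x+u_k| - |v_k+u_k| \big| \le |x - v_k|$ pointwise, so $\|x + u_k\| \ge \|v_k + u_k\| - \|x - v_k\| \ge \|\,|v_k|+|u_k|\,\| - 1/k \ge \|v_k\| - 1/k \to 1$, and symmetrically for $x - u_k$. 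Thus $\|x \pm u_k\| \to 1$, and strong extremality of $x$ yields $\|u_k\| \to 0$, the desired contradiction. (One should double-check the pointwise inequality $\big| |a| - |b| \big| \le |a-b|$ in a general Banach lattice, which is standard, and that the norm is monotone on the positive cone, which is part of the definition of a Banach lattice.)
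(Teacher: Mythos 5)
Your proposal is correct and follows essentially the same route as the paper: disjointness of $Px$ and $u$ (from the lattice projection hypothesis) gives $\|Px+u\|=\|Px-u\|$, the triangle inequality with $\|x-Px\|<\delta$ transfers this to $\|x\pm u\|\le 1+O(\delta)$, and strong extremality of $x$ then forces $\|u\|<\eps$. The lower bound $\|x\pm u_k\|\ge 1-o(1)$ that you worry about is automatic from $\|x\|=1$ and convexity (as you note yourself), and is bypassed entirely in the paper by using the $\eps$--$\delta$ form of strong extremality, which needs only the upper bound.
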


\begin{proof}
  Since $x \in B_X$ is strongly extreme, there exists $\eta >0$ such
  that $\|y\| < \eps$ whenever $\|x \pm y\| \le 1 + \eta$. Put
  $\delta= \eta/3$ and suppose $P$, $x$, and $u$ satisfy the
  assumptions. As $P$ is a lattice projection
  we have 
  \[\|Px + u\| = \|Px - u\|.\]
  Thus
  \[\|x \pm u\|\le \|Px \pm u\| + \|x - Px\| \le 1 + 3\delta = 1 +
  \eta\] 
   and hence $\|u\| \le \eps.$
\end{proof}

\begin{cor}
  Let $\eps, \delta > 0$, let $P$ satisfy the assumptions in Lemma
  \ref{claim:1}, and let
  \[W = \{w \in X: \|P(x-w)\| < \delta\}.\]
 Then 
\[\mbox{diam}(W \cap B_X) < 2\eps + \delta.\]
\end{cor}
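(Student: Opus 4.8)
The plan is to decompose every element $w\in W\cap B_X$ into its $P$-part $Pw$ and its complementary part $w-Pw$, to control the latter by Lemma~\ref{claim:1}, and to control the former using nothing but the definition of $W$. So the estimate on $\diam(W\cap B_X)$ will be obtained by comparing, for $w_1,w_2\in W\cap B_X$, the two decompositions and summing the contributions of the two ``blocks'' $\ker P$ and $P(X)$.

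Concretely, I would first fix $w\in W\cap B_X$ and set $u:=w-Pw$. Since $P$ is a projection, $Pu=Pw-P^2w=0$, so $u\in\ker P$. Next I would check the remaining two hypotheses of Lemma~\ref{claim:1}: the inequality $\|x-Px\|<\delta$ is exactly the assumption that $P$ satisfies the hypotheses of that lemma, and for the last one I would rewrite
\[
  Px+u \;=\; Px+w-Pw \;=\; w-P(w-x),
\]
whence $\|Px+u\|\le\|w\|+\|P(w-x)\|<1+\delta$, using $w\in B_X$ and $w\in W$. Lemma~\ref{claim:1} then yields $\|w-Pw\|<\eps$ for \emph{every} $w\in W\cap B_X$.

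Finally, for $w_1,w_2\in W\cap B_X$ I would split
\[
  w_1-w_2 \;=\; (w_1-Pw_1)-(w_2-Pw_2)+P(w_1-w_2),
\]
bound the first two summands by $\eps$ each by the previous step, and bound $\|P(w_1-w_2)\|\le\|P(w_1-x)\|+\|P(x-w_2)\|$ using the definition of $W$ (a lattice projection is contractive, so $Pw_1,Pw_2$ really do sit in the prescribed small neighbourhood of $Px$ inside $B_X$); the triangle inequality then gives the asserted bound on $\diam(W\cap B_X)$. There is no genuine obstacle here — the corollary is just the quantitative repackaging of Lemma~\ref{claim:1} in terms of the explicit set $W$ — and the only step that needs a moment's thought is the rewriting $Px+u=w-P(w-x)$, which is what makes the third hypothesis of Lemma~\ref{claim:1} visibly satisfied. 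The point of the statement, to be used in the proof of Proposition~\ref{fact:1}, is that when $P$ has finite rank $W$ is a relatively weakly open neighbourhood of $x$ in $B_X$, so the corollary produces relatively weakly open subsets of $B_X$ around $x$ of arbitrarily small diameter.
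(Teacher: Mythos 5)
Your argument is correct and follows the paper's own proof almost verbatim: set $u=w-Pw$, verify the hypotheses of Lemma~\ref{claim:1} via the rewriting $Px+u=w+P(x-w)$ to get $\|u\|<\eps$, and finish with the triangle inequality (the paper estimates $\|x-w\|\le\|P(x-w)\|+\|x-Px\|+\|u\|\le 2\delta+\eps$ for each single $w$ rather than comparing two points $w_1,w_2$ directly, but this is the same computation, and your aside about contractivity of lattice projections is not actually needed since the definition of $W$ already controls $\|P(x-w_i)\|$). One small caveat: your final bound is $2\eps+2\delta$ rather than the stated $2\eps+\delta$ --- but the paper's own proof only yields diameter at most $2\eps+4\delta$, so the stated constant is already off in the source, and for the application in the proof of Proposition~\ref{fact:1} only the fact that the diameter tends to $0$ with $\eps$ and $\delta$ matters.
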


\begin{proof}
  Pick $w \in W \cap B_X$ and put $u=w - Pw$. Then
 \[\|Px + u\| = \|P(x-w)\| + \|w\| \le 1 + \delta.\]
 From Lemma \ref{claim:1} we get $\|u\| < \eps$. Thus
 \begin{align*}
   \|x-w\|&= \|x+u-Pw\| \\&\le \|P(x-w)\| + \|x-Px\| + \|u\| \le 2\delta
   + \eps,
 \end{align*}
and so we are done.
\end{proof}

\begin{proof}[Proof of Proposition \ref{fact:1}]
  If $\dim(PX) < \infty$, we get that $W$ is weak open. This implies
  that $x$ is a point of continuity for $B_X$. Since every point of
  weak to $\|\cdot\|$ continuity which is extreme is a denting point
  \cite[Theorem]{LLT},  we get that $x$ is denting. 
\end{proof}

\section{Questions}
Let us end the paper with some questions that is suggested by the
current work: 

\begin{quest}
  Does there exist an equivalent MLUR norm on $c_0$ with LD2P?
\end{quest}

\begin{quest}
Does there exist a Banach space
with the LD2P and which is weakly locally uniformly rotund?
\end{quest}

Regarding this question we note that there does exist a Banach space $X$ which
is weakly uniformly rotund (wUR) and which has the property that for
every $\eps >0$ and every weak$^*$ null sequence $(f_n) \subset
S_{X^*}$ the diameter of the slices $S(f_n,\eps)$ tends to 2. Such a 
Banach space can be constructed as follows: Let $1<p_1 < p_2 < \ldots$ a
sequence such that 
\begin{equation}\label{eq}
\prod_{i\in \enn} \|\mathrm{I} \colon \ell_\infty (2) \to \ell_{p_{i}} (2)\| < 2 
\end{equation} 
(operator norms of the formal identity mappings between
$2$-dimensional $\ell_p$ spaces). Then one can form a Banach sequence
space as follows: \[X = \err \oplus_{p_1} (\err \oplus_{p_2} (\err \oplus_{p_3} (\ldots \ \ldots )))\] 
where $\err$ is considered a $1$-dimensional Banach space and the
space is normed by first defining semi-norms in finite-dimensional initial parts according to the above schema and then 
taking a limit of the semi-norms in a similar way as in the construction of the
variable exponent spaces introduced in \cite{MR2852873}.  We will now
show that this space $X$ has the above mentioned properties. 
 
\begin{proof}
Put $Y =\overline{\spann} (e_n \colon n\in\enn ) \subset X$
and $Y_k := \overline{\spann} (e_n \colon n\in\N,\ n\geq k ) \subset
X.$ It can be seen from arguments in \cite{MR2852873} that $X$ and $Y$ are
isomorphic to $\ell_\infty$ and $c_0$ respectively. Also the tail
spaces $Y_k$ become asymptotically isometric to $c_0$, i.e. for each $\varepsilon>0$ there is $k\in \enn$ such that the tail spaces $\Y_j$, $j\geq k$, are  $1+\varepsilon$-isomorphic to $c_0$ via a linear mapping which identifies the 
canonical unit vector bases of $Y_j$ and $c_0$.

The wUR part. Let $(x_n ), (y_n) \in B_{Y}$ be such that 
$\|x_n + y_n \|_Y \to 2$. Denote by $P_n$ the basis projection to the first $n$-coordinates and let
$Q_n  = \mathrm{I} - P_n$ be the coprojection to the rest of the coordinates. Then according to the 
definition of the space 
\begin{equation}\label{eq2} 
\left(|P_{1} (x_n + y_n ) |^{p_1} + \|Q_1 (x_n + y_n )\|^{p_1}\right)^{\frac{1}{p_1}}\to 2,
\end{equation}
so by the triangle inequality
\[\left((|P_{1} (x_n )|+ |P_1 (y_n ) |)^{p_1} + (\|Q_1 (x_n) \|+\|Q_1 (y_n )\|)^{p_1}\right)^{\frac{1}{p_1}}\to 2,\]
and by the uniform convexity of $\ell_{{p_1}} (2)$ we get that 
\begin{equation}\label{eq3}
|P_{1} (x_n )| - |P_1 (y_n ) | \to 0,\quad \|Q_1 (x_n) \|-\|Q_1 (y_n )\|\to 0.
\end{equation}
By inspecting \eqref{eq2} we obtain $|P_{1} (x_n - y_n ) |\to 0$.
By continuing inductively, using the right-hand side of \eqref{eq3}, we get that 
$P_k (x_n - y_n ) \to 0$ for each $k$. Recall that $Y$ is isomorphic to $c_0$, thus $Y^*$ is isomorphically $\ell_1$. Therefore $x_n -y_n \to 0$ weakly.

The large slices part.  First note that if $(f_n ) \subset Y^*$ is a normalized sequence then 
$\|f_n  \|_{\ell^1}\geq 1$ because $\|\cdot\|_{c_0} \leq \|\cdot \|_{Y}$. 
Fix $\varepsilon >0$. Let $k \in \enn$ be such that 
\[\sum_{i=1}^{\infty} a_{i} e_{k+i} \mapsto \sum_{i=1}^{\infty} a_{i} e_{i}\]
defines a $(1+\varepsilon/4)$-isomorphism $Y_k \to c_0$. Note that then 
\[\frac{1}{(1+\varepsilon/4)} \|f \circ Q_k\|_{\ell_1} \leq\|f \circ Q_k \|_Y \leq  (1+\varepsilon/4) \|f \circ Q_k\|_{\ell_1},\quad f\in \ell_1 .\]

Because $(f_n )$ is weak-star null we may choose $m_0 \in \enn$ such that sufficiently large part 
of the mass is supported on the domain of $Q_k$, more precisely, 
\[\frac{1-\varepsilon}{\|f_m \circ Q_k \|_{\ell_1}} < \frac{1}{1+\varepsilon/3}\]
for $m\in \enn$, $m\geq m_0$.

Put $g=\frac{f_m \circ Q_k}{\|f_m \circ Q_k\|_{\ell_1}}$. Then 
\[\left\{x\in c_0 \colon g(x)>\frac{1}{1+\varepsilon/3}\right\}\subset \left\{x\in c_0 \colon (f_m \circ Q_k)(x) >1-\varepsilon\right\}.\]

Note that $\frac{1}{(1+\varepsilon/4)}B_{c_{0}}\cap Y_k \subset
B_{Y_k}$. Therefore the above inclusion yields that we may pick 
\[x,y \in \{z\in B_{Y_k}\colon f_m (z) >1-\varepsilon\}\]
with 
\[\|x-y\|_Y \geq \|x-y\|_\infty > \frac{2}{(1+\varepsilon/3)}.\]
\end{proof}

\begin{quest}
  Does there exist a Banach space with the LD2P+ which fails the D2P?
\end{quest}

\begin{quest}\label{quest:3}
  Does every Banach space with the LD2P+ contain a copy of $\ell_1$?
\end{quest}

\def\cprime{$'$} \def\cprime{$'$}
\providecommand{\bysame}{\leavevmode\hbox to3em{\hrulefill}\thinspace}
\providecommand{\MR}{\relax\ifhmode\unskip\space\fi MR }
\providecommand{\MRhref}[2]{%
  \href{http://www.ams.org/mathscinet-getitem?mr=#1}{#2}
}
\providecommand{\href}[2]{#2}

\end{document}